\DeclareMathAlphabet{\varmathbb}{U}{pxsyb}{m}{n}
\def\leq{\leqslant}
\def\geq{\geqslant}
\def\phi{\varphi}
\def\bar{\overline}
\def\kappa{\varkappa}
\newcommand{\D}{\mathrm{d}\kern0.2pt}%
\newcommand{\E}{\mathrm{e}\kern0.2pt} 
\newcommand{\ii}{\kern0.05em\mathrm{i}\kern0.05em}
\newcommand{\RR}{\mathbb{R}}%
\newtheorem{theorem}{\bf \indent Theorem}[section]
\newtheorem{proposition}{\bf \indent Proposition}[section]
\newtheorem{corollary}{\bf \indent Corollary}[section]
\theoremstyle{remark} 
\newtheorem{remark}{\bf \indent Remark}[section]
\newtheorem{definition}{\bf\indent Definition}[section]
\numberwithin{equation}{section}
\begin{document}

\noindent {\Large \bf Mean value properties of solutions to the modified Helmholtz
\\[3pt] equation and related topics (a survey)}

\vskip2mm

{\bf Nikolay Kuznetsov}

\vskip-2pt {\small Laboratory for Mathematical Modelling of Wave Phenomena}
\vskip-4pt {\small Institute for Problems in Mechanical Engineering} \vskip-4pt
{\small Russian Academy of Sciences} \vskip-4pt {\small V.O., Bol'shoy pr. 61, St
Petersburg, 199178} \vskip-4pt {\small Russian Federation} \vskip-4pt {\small
nikolay.g.kuznetsov@gmail.com}

\begin{quote}
\noindent Recent results concerning solutions of the modified Helmholtz equation are
reviewed; namely, various mean value properties and their corollaries, converse and
inverse of these properties, and relations between these solutions and harmonic
functions.
\end{quote}

\vspace{-8mm}

{\centering \section{Introduction} }

\noindent In the present article, we consider real-valued $C^2$-solutions of the
$m$-dimensional modified Helmholtz equation:
\begin{equation}
\nabla^2 u - \mu^2 u = 0 , \quad \mu \in \RR \setminus \{0\} ;
\label{Hh}
\end{equation}
$\nabla = (\partial_1, \dots , \partial_m)$ is the gradient operator, $\partial_i =
\partial / \partial x_i$. Unfortunately, it is not commonly known that these
solutions are called panharmonic functions (or $\mu$-panharmonic functions) by
analogy with solutions of the Laplace equation; this convenient abbreviation coined
by Duffin \cite{D} will be used in what follows. In the latter paper and elsewhere,
Duffin refers to \eqref{Hh} as the Yukawa equation, surprisingly, without citing his
original paper \cite{Y}, in which the Nobel Prize winning theory of nuclear forces
was proposed. For describing the force potential of a point charge that decays
rapidly at infinity, Yukawa used the three-dimensional fundamental solution of
\eqref{Hh}, which has this property. Thus, it is quite reasonable to name equation
\eqref{Hh} after Yukawa, even taking into account, that he did not use it in
\cite{Y}. However, one can find still more confusing name of \eqref{Hh}, namely, the
Helmholtz equation; see \cite{S} and \cite{LNV}, p.~231.
 
Undeservedly, panharmonic functions received much less attention than harmonic and
subharmonic, despite the fact that studies of the three-dimensional equation
\eqref{Hh} were initiated by C.~Neumann in his monograph \cite{NC} published in
1896. Since then, ridiculously small number of papers treating rigorously solutions
of equation \eqref{Hh} have been published and their content varies noticeably. Some
consider particular boundary value problems (see, for example, \cite{BD},
\cite{BrDe} and \cite{S}), whereas others are concerned with the so-called
$\mu$-regular class of pseudoanalytic functions satisfying the Cauchy--Riemann
equations for the two-dimensional version of \eqref{Hh}; see, for example, \cite{D},
\cite{D1} and \cite{SW}. Finally, it is worth mentioning the representation formulas
for panharmonic functions obtained in \cite{CL}.

Recently, the author published several notes \cite{Ku1}, \cite{Ku2}, \cite{Ku3},
\cite{Ku4}, \cite{Ku5}, \cite{Ku6} and \cite{Ku7} dealing with various topics: mean
value formulae for panharmonic functions, their corollaries, converse theorems and
other related results such as characterization of balls via these functions. These
studies were initiated after completing the survey \cite{Ku}, preparing which it was
a surprise to discover that only mean value formulae for spheres and circumferences
in three and two dimensions were derived earlier by C.~Neumann and Duffin,
respectively. Moreover, relations between harmonic, subharmonic and panharmonic
functions also remained unnoticed.

The aim of this survey is to present the obtained results in a systematic,
self-contained form similar, to some extent, to a comprehensive theory developed by
Duffin for two-dimensional punharmonic functions; see \cite{D} and \cite{D1}. The
plan of the article is as follows. Various mean value formulae (volume, spherical,
asymptotic etc.) and their corollaries are presented in Sect.~2, whereas Sect.~3
deals with converse of these mean value properties. Two characterizations of balls
via panharmonic functions are described in Sect.~4. Relations between harmonic and
panharmonic functions are considered in Sect.~5.

\vspace{2mm}

{\centering \section{Mean value formulae and their corollaries} }

The following analogue of the Gauss theorem on the arithmetic mean of a harmonic
function over an $m-1$-dimensional sphere in ${\RR}^m$ (the original memoir
\cite{G}, Article~20, deals with the case $m=3$) was derived for panharmonic
functions in \cite{Ku1}.

\begin{theorem}[\cite{Ku1}]
Let $u$ be panharmonic in a domain $D \subset {\RR}^m$, $m \geq 2$. Then for every
$x \in D$ the identity\\[-3mm]
\begin{equation}
M^\circ (u, x, r) =  a^\circ (\mu r) \, u (x) , \quad a^\circ (\mu r) = \Gamma
\left( \frac{m}{2} \right) \frac{I_{(m-2)/2} (\mu r)}{(\mu r / 2)^{(m-2)/2}} \, ,
\label{MM}
\end{equation}
holds for each admissible sphere $S_r (x);$ $I_\nu$ denotes the modified Bessel
function of order $\nu$.
\end{theorem}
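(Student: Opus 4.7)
Fix $x \in D$ and let $r>0$ be such that $\overline{B_r(x)} \subset D$ (i.e.\ the sphere is admissible). Set
\[
M(r) = M^\circ(u, x, r) = \frac{1}{|S_1|}\int_{S_1} u(x + r\xi)\, \D S(\xi),
\]
so that $M$ is smooth in $r$, $M(0) = u(x)$, and $M'(0) = 0$. The first step is to derive an ordinary differential equation for $M$. Differentiating under the integral and writing the result on the sphere $S_r(x)$ with outward unit normal $\nu$, one gets $M'(r) = |S_1|^{-1} r^{1-m} \int_{S_r(x)} \partial_\nu u\, \D S$; applying the divergence theorem and the equation $\nabla^2 u = \mu^2 u$ on $B_r(x)$ yields
\[
r^{m-1} M'(r) = \frac{\mu^2}{|S_1|}\int_{B_r(x)} u\, \D y .
\]
Differentiating once more in $r$ and dividing by $r^{m-1}$ gives the radial equation
\[
M''(r) + \frac{m-1}{r}\, M'(r) - \mu^2 M(r) = 0 .
\]

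Next, I would convert this into a standard modified Bessel equation. The substitution $M(r) = (\mu r/2)^{-(m-2)/2} w(\mu r)$ (or equivalently $M(r) = s^{(2-m)/2} w(s)$ with $s = \mu r$) transforms the ODE into
\[
w''(s) + \frac{1}{s}\, w'(s) - \Bigl(1 + \frac{((m-2)/2)^2}{s^2}\Bigr) w(s) = 0,
\]
whose linearly independent solutions are the modified Bessel functions $I_{(m-2)/2}(s)$ and $K_{(m-2)/2}(s)$.

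The next step is to select the correct solution using regularity at the origin. Since $M(r)$ is continuous at $r=0$ with $M(0) = u(x)$, and since $K_{(m-2)/2}(s)$ blows up as $s\to 0^+$ (it grows like $s^{-(m-2)/2}$ for $m\ge 3$ or like $-\log s$ for $m=2$, so after multiplication by $s^{(2-m)/2}$ it is incompatible with a finite limit at $0$), the $K$-component must vanish. Hence $M(r) = C\,(\mu r/2)^{-(m-2)/2}\, I_{(m-2)/2}(\mu r)$ for some constant $C$. The constant is then fixed by the well-known expansion $I_\nu(s) \sim (s/2)^\nu/\Gamma(\nu+1)$ as $s\to 0^+$, applied with $\nu = (m-2)/2$: this gives $\lim_{r\to 0} M(r) = C/\Gamma(m/2)$, and matching with $M(0) = u(x)$ yields $C = \Gamma(m/2)\, u(x)$, which is precisely formula \eqref{MM}.

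The only non-routine step is the derivation of the radial ODE, which requires the two integrations-by-parts described above, combined with the equation $\nabla^2 u = \mu^2 u$; everything after that is a standard identification of the regular Bessel solution together with a constant-matching at $r=0$. The admissibility of $S_r(x)$ is used solely to ensure that the divergence theorem on $B_r(x)$ and differentiation under the integral sign are legitimate.
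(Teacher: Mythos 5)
Your proof is correct, and it shares its backbone with the paper's proof: both reduce the statement to the same radial ODE $M'' + (m-1)\,r^{-1}M' - \mu^2 M = 0$ for $M(r) = M^\circ(u,x,r)$, obtained by combining the divergence (Green) theorem on $B_r(x)$ with the equation $\nabla^2 u = \mu^2 u$ --- the paper first derives the Euler--Poisson--Darboux equation $M^\circ_{rr} + (m-1)\,r^{-1}M^\circ_r = \nabla_x^2 M^\circ$ for arbitrary $C^2$ functions and only then inserts panharmonicity, whereas you insert it from the start. The genuine difference lies in how the ODE is resolved. The paper verifies, via the recurrence and differentiation formulas for $I_\nu$, that $a(r) = a^\circ(\mu r)$ solves the Cauchy problem $a(0)=1$, $a_r(0)=0$ for this ODE, then forms $w = a^\circ(\mu r)\,u(x) - M(r)$ and appeals to the uniqueness of the trivial solution of the singular Cauchy problem with zero initial data. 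You instead exhibit the full solution basis $s^{-\nu}I_\nu(s)$, $s^{-\nu}K_\nu(s)$ with $\nu=(m-2)/2$, $s=\mu r$, eliminate the $K_\nu$ branch by boundedness of $M$ near $r=0$ (your case analysis correctly covers both $m\ge 3$ and the logarithmic blow-up of $K_0$ when $m=2$), and fix the remaining constant by $M(0^+)=u(x)$. What your route buys: it \emph{derives} the coefficient $a^\circ$ rather than verifying a formula guessed in advance, and it makes explicit the Frobenius-type structure of solutions near the regular singular point $r=0$, which is precisely what underlies the uniqueness claim that the paper invokes without proof. What the paper's route buys: it requires no knowledge of the second solution $K_\nu$ at all, only the two recurrences for $I_\nu$, and the same Cauchy-problem scheme transfers verbatim to the proof of the volume-mean identity and related statements later in the paper.
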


Here and below the following notation and terminology are used. The open ball of
radius $r$ centred at $x$ is denoted by $B_r (x) = \{ y : |y-x| < r \}$; the latter
is called admissible with respect to a domain $D$ provided $\overline{B_r (x)}
\subset D$, and $S_r (x) = \partial B_r (x)$ is the corresponding admissible sphere.
If $u \in C^0 (D)$, then its spherical mean value over $S_r (x) \subset D$
is\\[-2mm]
\begin{equation}
M^\circ (u, x, r) = \frac{1}{|S_r|} \int_{S_r (x)} u (y) \, \D S_y =
\frac{1}{\omega_m} \int_{S_1 (0)} u (x +r y) \, \D S_y \, , \label{sm}
\end{equation}
where $|S_r| = \omega_m r^{m-1}$ and $\omega_m = 2 \, \pi^{m/2} / \Gamma (m/2)$ is
the total area of the unit sphere (as usual $\Gamma$ stands for the Gamma function),
and $\D S$ is the surface area measure. It is clear that this function is continuous
in $x$ and $r$; moreover, if $u \in C^k (D)$, then its mean value is in the same
class in $x$ and $r$. By continuity we have that\\[-2mm]
\[ M^\circ (u, x, 0) = u (x) ,
\]
whereas further identities for $M^\circ$ can be found in \cite{J}, Chapter IV.

For $m=3$ identity \eqref{MM} (derived by C.~Neumann \cite{NC} as early as 1896) is
particularly simple because $a^\circ (\mu r) = \sinh (\mu r) / (\mu r)$. Duffin
independently rediscovered the proof (see~\cite{D}, pp.~111-112), but in two
dimensions with $a^\circ (\mu r) = I_0 (\mu r)$.

Our proof of Theorem 2.1 is based on the Euler--Poisson--Darboux equation for
$M^\circ$ (see \cite{J}, p.~88):
\begin{equation}
M^\circ_{rr} + (m-1) \, r^{-1} M^\circ_r = \nabla^2_x M^\circ \, , \ \ \mbox{where}
\ r > 0 , \ x \in D \, . \label{EPD}
\end{equation}
It is valid provided $u \in C^2 (D)$ and follows from the obvious relation
\begin{equation}
m \int_0^r t^{m-1} M^\circ (u, x, t) \, \D t = r^m  M^\bullet (u, x, r) \, ,
\label{M+M}
\end{equation}
where
\[ M^\bullet (u, x, r) = \frac{1}{|B_r|} \int_{B_r (x)} \!\! u (y) \, \D y 
\]
and $|B_r| = \omega_m r^m / m$ is the volume of $B_r$. Indeed, applying the
Laplacian to both sides of \eqref{M+M}, we obtain
\[ \omega_m \int_0^r t^{m-1} \nabla^2_x M^\circ (u, x, t) \, \D t = 
\int_{B_r (0)} \!\! \nabla^2_x u (x + y) \, \D y \, .
\]
By Green's first formula the last integral is equal to
\[ \int_{|y| = r} \!\! \nabla_x \, u (x+y) \cdot \frac{y}{r} \, \D S_y \, ,
\]
and changing variables this can be written as follows:
\[ r^{m-1} \frac{\partial}{\partial r} \int_{|y|=1} \!\! u (x + r y) \, 
\D S_y = \omega_m r^{m-1} M_r^\circ (u, x, r) \, .
\]
Thus we arrive at the equality
\begin{equation*}
r^{m-1} M_r^\circ (u, x, r) = \int_0^r t^{m-1} \nabla^2_x M^\circ (u, x, t) \, \D t
\label{M_r}
\end{equation*}
Differentiation of this relation with respect to $r$ yields \eqref{EPD}.

Now we are in a position to prove the theorem.

\begin{proof}[Proof of Theorem 2.1.]
It is straightforward to show that $a (r) = a^\circ (\mu r)$ is a unique solution
the following Cauchy problem:
\begin{equation}
a_{rr} + (m-1) \, r^{-1} a_r - \mu^2 a = 0 , \ \ a (0) = 1 , \ \ a_r (0) = 0 \, .
\label{g5}
\end{equation}
This follows by virtue of the relations (see \cite{Wa}, p. 79):
\begin{equation}
z I_{\nu+1} (z) + 2 \nu I_\nu (z) - z I_{\nu-1} (z) = 0 \, , \ \ \ [z^{-\nu} I_\nu
(z)]' = z^{-\nu} I_{\nu+1} (z) \, . \label{diff}
\end{equation}
In particular, the second one implies the second initial condition.

The function $w (r, x) = a^\circ (\mu r) \, u (x) - M^\circ (u, x, r)$ is defined
for all $x \in D$ and all $r \geq 0$ such that $S_r (x)$ is admissible and satisfies
the initial conditions
\[ w (x, 0) = 0 \, , \quad w_r (x, 0) = 0 \, .
\]
The first one is a consequence of the identities $a^\circ (0) = 1$ and $M^\circ (u,
x, 0) = u (x)$, whereas the second one follows from equation \eqref{EPD} multiplied
by $r$ in the limit as $r \to 0$. Moreover, equations \eqref{EPD} and \eqref{g5}
yield that
\[ w_{rr} + (m-1) \, r^{-1} w_r - \mu^2 w = 0 \ \ \mbox{for} \ r > 0 \, .
\]
Since the latter Cauchy problem has only a trivial solution, we obtain \eqref{MM}.
\end{proof}

It follows from Theorem 2.1 that a panharmonic function of fixed sign belongs to one
of two well studied classes of functions; namely, subharmonic or superharmonic. In
our context, it is sufficient to define these classes as follows.

\begin{definition}[Gilbarg and Trudinger \cite{GT}, p.~23]
A function $u \in C^0 (D)$ is called subharmonic (superharmonic) in $D$ if for every
admissible ball $B \subset D$ and every function $h$ harmonic in $B$ and satisfying
$u \leq h$ ($u \geq h$) on $\partial B$, the same inequality holds throughout~$\bar
B$.
\end{definition}

\begin{corollary}[\cite{Ku7}]
Let a panharmonic function $u$ be nonnegative (nonpositive) in a domain $D$. Then
$u$ is subharmonic (superharmonic) in $D$.
\end{corollary}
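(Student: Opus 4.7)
My plan is to derive the comparison condition of Definition 2.1 directly from the mean value formula of Theorem 2.1.

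The first step is to check that the factor $a^\circ(\mu r)$ satisfies $a^\circ(\mu r)\geq 1$ for every $r\geq 0$, with strict inequality whenever $r>0$. This is immediate from the power series
\[
I_\nu(z)=\sum_{k=0}^\infty \frac{(z/2)^{\nu+2k}}{k!\,\Gamma(\nu+k+1)},
\]
which, substituted into the definition of $a^\circ$ in \eqref{MM}, gives
\[
a^\circ(\mu r)=\sum_{k=0}^\infty \frac{\Gamma(m/2)}{k!\,\Gamma(m/2+k)}\left(\frac{\mu r}{2}\right)^{2k}\geq 1,
\]
with strict inequality once $r>0$. (Equivalently, multiplying \eqref{g5} by $r^{m-1}$ gives $(r^{m-1}a_r)_r=\mu^2 r^{m-1}a$, so $a_r>0$ as long as $a>0$; combined with $a(0)=1$ this keeps $a\geq 1$ everywhere.)

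Consequently, if $u\geq 0$ in $D$ then Theorem 2.1 yields
\[
M^\circ(u,x,r)=a^\circ(\mu r)\,u(x)\geq u(x)
\]
on every admissible sphere $S_r(x)\subset D$, i.e.\ $u$ enjoys the classical sub-mean-value property; the reverse inequality holds when $u\leq 0$.

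The remaining step, and the main obstacle, is to pass from this sub-mean-value inequality to the comparison condition in Definition 2.1. Fix an admissible ball $B\subset D$ and a function $h$ harmonic in $B$ with $u\leq h$ on $\partial B$, and put $v=u-h$. Because $h$ obeys the Gauss mean value identity, subtraction gives $v(x)\leq M^\circ(v,x,r)$ on every admissible sphere contained in $B$. I would then run the standard maximum principle for functions enjoying the sub-mean-value property: if $M=\max_{\bar B}v$ were positive, then, as $v\leq 0$ on $\partial B$, the set $E=\{x\in B:v(x)=M\}$ would be a nonempty closed subset of $B$; the sub-mean-value inequality together with $v\leq M$ would force $v\equiv M$ on every admissible sphere centred at a point of $E$, so $E$ would also be open; connectedness of $B$ would give $E=B$, contradicting $v\leq 0$ on $\partial B$. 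Hence $v\leq 0$, i.e.\ $u\leq h$ throughout $\bar B$, which is exactly Definition 2.1. The superharmonic case follows verbatim by flipping signs.
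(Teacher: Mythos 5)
Your proof is correct and takes essentially the same route as the paper: establish $a^\circ(\mu r)>1$ for $r>0$ (the paper via the monotonicity coming from the second relation in \eqref{diff}, you via the power series of $I_\nu$), deduce the sub-mean-value inequality $u(x)\leq M^\circ(u,x,r)$ from Theorem 2.1, and conclude via Definition 2.1. The only difference is that the paper declares the last step to follow ``immediately,'' whereas you spell out the standard maximum-principle argument showing that the sub-mean-value property yields the comparison with harmonic functions --- a useful elaboration, but not a different method.
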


\begin{proof}
The function $a^\circ$ increases monotonically on $[0, \infty)$ from $a^\circ (0) =
1$ to infinity. Indeed, the second relation \eqref{diff} implies the monotonicity,
whereas the behavior at infinity is a consequence of the asymptotic formula
\begin{equation}
I_\nu (z) = \frac{\E^z}{\sqrt{2 \pi z}} \left[ 1 + O (|z|^{-1}) \right] \, , \ \
|\arg z| < \pi /2 , \label{asym_I}
\end{equation}
whose principal term does not depend on $\nu$, is valid as $|z| \to \infty$; see
\cite{Wa}, p. 80.

Since $a^\circ (\mu r) > 1$, identity \eqref{MM} yields that $u (x) \leq M^\circ (u,
x, r)$ for every $S_r (x) \subset D$ provided the $\mu$-panharmonic $u$ is
nonnegative in $D$. The result immediately follows from this inequality and
Definition 2.1.
\end{proof}

The converse of Corollary 2.1 is not true, because any nonzero constant is
subharmonic and superharmonic, but not panharmonic. Another consequence of Theorem
2.1 is the following.

\begin{corollary}[\cite{Ku1}]
Let $D$ be a domain in ${\RR}^m$, $m \geq 2$. If $u$ is panharmonic in $D$, then
\begin{equation}
M^\bullet (u, x, r) =  a^\bullet (\mu r) \, u (x) , \quad a^\bullet (\mu r) =
\Gamma \left( \frac{m}{2} + 1 \right) \frac{I_{m/2} (\mu r)}{(\mu r / 2)^{m/2}} \, ,
\label{MM'}
\end{equation}
and
\begin{equation} 
a^\circ (\mu r) M^\bullet (u, x, r) = a^\bullet (\mu r) M^\circ (u, x, r) 
\label{MMtil}
\end{equation}
for every admissible ball $B_r (x)$.
\end{corollary}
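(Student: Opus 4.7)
The plan is to deduce both identities from Theorem 2.1 together with the elementary relation \eqref{M+M} connecting volume and spherical means, which was already derived in the preamble to Theorem 2.1 and holds for any $u\in C^0(D)$.

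First I would rewrite \eqref{M+M} in the form
\[
r^m M^\bullet (u,x,r) \;=\; m \int_0^r t^{m-1} M^\circ (u, x, t) \, \D t
\]
and substitute the spherical mean value identity \eqref{MM} from Theorem 2.1. Since $u(x)$ does not depend on $t$, it factors out of the integral, leaving
\[
r^m M^\bullet(u,x,r) \;=\; m\,u(x) \int_0^r t^{m-1}\,a^\circ(\mu t)\,\D t.
\]
After inserting the explicit expression for $a^\circ(\mu t)$ and cleaning up powers of $2$ and $\mu$, the task reduces to evaluating $\int_0^r t^{m/2} I_{(m-2)/2}(\mu t)\,\D t$.

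To compute this Bessel integral I would invoke the identity $[z^{\nu} I_\nu(z)]' = z^{\nu} I_{\nu-1}(z)$, which is a direct consequence of the two relations displayed in \eqref{diff} (multiply the first by $z^{\nu-1}$ and combine with the second shifted in $\nu$). Applying it with $\nu = m/2$ after the substitution $z = \mu t$ gives an explicit antiderivative $z^{m/2} I_{m/2}(z)/\mu$, which vanishes at $z=0$ because $I_{m/2}(z) = O(z^{m/2})$ near the origin. Collecting the resulting powers of $\mu r/2$ and using $(m/2)\Gamma(m/2)=\Gamma(m/2+1)$ yields precisely the constant $a^\bullet(\mu r)$, proving \eqref{MM'}.

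The second identity \eqref{MMtil} is then immediate: by \eqref{MM} one has $u(x) = M^\circ(u,x,r)/a^\circ(\mu r)$ (noting $a^\circ(\mu r)\geq 1 > 0$, as established in the proof of Corollary 2.1), and substituting this into \eqref{MM'} rearranges to \eqref{MMtil}. The main obstacle in the whole argument is simply bookkeeping: tracking the powers of $2$, $\mu$ and $r$ through the change of variables, and verifying the auxiliary differentiation identity for $I_\nu$ from what the paper has already cited. No new analytic machinery is required beyond Theorem 2.1, the volume-vs.-spherical-mean identity \eqref{M+M}, and the Bessel function relations in \eqref{diff}.
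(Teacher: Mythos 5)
Your proposal is correct and is essentially the paper's own proof: the paper likewise multiplies the spherical mean identity \eqref{MM} by $\rho^{m-1}$, integrates over $(0,r)$ (which is exactly the relation \eqref{M+M}), evaluates the resulting Bessel integral $\int_0^x x^{1+\nu} I_\nu (x) \, \D x = x^{1+\nu} I_{\nu+1} (x)$ with $\nu = (m-2)/2$, and then obtains \eqref{MMtil} by combining \eqref{MM} and \eqref{MM'}. The only cosmetic difference is that the paper cites this integral from a table (formula 1.11.1.5 of Prudnikov--Brychkov--Marichev), whereas you derive the equivalent antiderivative $[z^{\nu} I_\nu (z)]' = z^{\nu} I_{\nu-1} (z)$ directly from the relations \eqref{diff}, which is a valid and slightly more self-contained justification of the same step.
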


\begin{proof}
Let us write formula \eqref{MM} in the form
\begin{equation}
\omega_m^{-1} \int_{S_1 (0)} \!\! u (x + \rho y) \, \D S_y = a^\circ (\mu \rho) \, u
(x) \, , \label{03Au}
\end{equation}
multiply by $\rho^{m-1}$, and integrate with respect to $\rho$ over $(0, r)$, where
$r$ is such that $B_r (x)$ is admissible. Thus we obtain $M^\bullet (u, x, r)$ on
the left-hand side after division by $r^m$. Applying formula 1.11.1.5, \cite{PBM},
namely,
\begin{equation*}
\int_0^x \!\! x^{1 + \nu} I_\nu (x) \, \D x = x^{1 + \nu} I_{\nu + 1} (x) \, , \ \
\Re \, \nu > -1 . \label{PBM}
\end{equation*}
with $\nu = (m-2)/2$ while integrating the right-hand side, identity \eqref{MM'}
follows. Combining \eqref{MM} and \eqref{MM'}, one arrives at \eqref{MMtil}.
\end{proof}

\begin{remark}
It is clear from the proof of Corollary 2.2 that identities \eqref{MM} and
\eqref{MM'} are equivalent. Like $a^\circ$, the function $a^\bullet$ increases
monotonically on $[0, \infty)$ from $a^\bullet (0)=~1$ to infinity. Moreover,
$a^\bullet (t) / a^\circ (t) < 1$ for $t > 0$, which immediately follows from their
definitions and the first formula \eqref{diff}.

Identity \eqref{MMtil} couples the mean values over spheres and balls for a
$\mu$-panharmo\-nic $u$. In this identity, the ratio of coefficients $a^\circ (\mu
r) / a^\bullet (\mu r)$ tends to unity in the limit as $\mu \to 0$, thus reducing
\eqref{MMtil} to the formula equating the mean values of a harmonic function over
spheres and balls.
\end{remark}

Another corollary of Theorem 2.1 deals with iterated spherical means introduced by
John; see \cite{J}, p.~78, but the notation is different here. Let $D_r$ be the
subdomain of $D$ with boundary `parallel' to $\partial D$ at the distance $r > 0$;
namely, $D_r = \{ x \in D : \overline{B_r (x)} \subset D \}$. Thus, $D_r$ is
nonempty only when $r$ is less than the radius of the open ball inscribed into~$D$.
Since $M^\circ (u, \cdot, r)$ is defined on $D_r$, it is clear that
\begin{equation*}
I (u, x, r', r) = M^\circ (M^\circ (u, \cdot, r), x, r') = \frac{1}{\omega_m}
\int_{S_1 (0)} M^\circ (u, x + r' y, r) \, \D S_y \label{im}
\end{equation*}
is defined on $D_{r'+r}$; here the second equality is a consequence of \eqref{sm}.
Substituting the expression for $M^\circ$, we obtain
\begin{equation}
I (u, x, r', r) = \frac{1}{\omega_m^2} \int_{S_1 (0)} \int_{S_1 (0)} u (x + r' y +
r z) \, \D S_z \, \D S_y \, , \label{im'}
\end{equation}
and so it is symmetric in $r'$ and $r$, that is, $I (u, x, r', r) = I (u, x,
r, r')$. Moreover,
\[ I (u, x, 0, r) = I (u, x, r, 0) = M^\circ (u, x, r) \quad \mbox{and} \quad I 
(u, x, 0, 0) = u (x) .
\]

Now we see that \eqref{03Au} implies the following assertion concerning the
iterated mean value property.

\begin{corollary}
Let $u$ be $\mu$-panharmonic in a domain $D \subset \RR^m$, $m \geq 2$. If the
domain $D_r$ is nonempty for $r > 0$, then $M^\circ (u, \cdot, r)$ is
$\mu$-panharmonic in it and
\begin{equation*}
I (u, x, r', r) =  a^\circ (\mu r') \, M^\circ (u, x, r) = a^\circ (\mu r') \,
a^\circ (\mu r) \, u (x) \label{iter}
\end{equation*}
for every $x \in D_r$ and all $S_{r'} (x)$ admissible with respect to $D_r$.
\end{corollary}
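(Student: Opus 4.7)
The plan is to recognize the statement as a two-fold application of Theorem 2.1, with the definition of $I$ serving as the bookkeeping between them. Since everything in sight is expressed via the spherical mean, the only care required is to check that the admissibility condition on $S_{r'}(x)$ relative to $D_r$ licenses the use of Theorem 2.1 at each point of $S_{r'}(x)$.

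First I would verify that $v(y) := M^\circ(u, y, r)$ is $\mu$-panharmonic in $D_r$. Directly by Theorem 2.1 (written as \eqref{03Au}), we have $v(y) = a^\circ(\mu r)\,u(y)$ for every $y \in D_r$, because the admissibility of $S_r(y)$ with respect to $D$ is precisely the defining condition $\overline{B_r(y)} \subset D$ of $D_r$. Since $a^\circ(\mu r)$ is a constant in $y$ and $u$ is panharmonic in $D \supseteq D_r$, the function $v$ is panharmonic in $D_r$, which is the first assertion.

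Next I would apply Theorem 2.1 to this panharmonic function $v$ on the domain $D_r$. The hypothesis that $S_{r'}(x)$ is admissible with respect to $D_r$, i.e.\ $\overline{B_{r'}(x)} \subset D_r$, guarantees that for every $y \in \overline{B_{r'}(x)}$ one has $\overline{B_r(y)} \subset D$, so $v$ is well-defined and panharmonic on a neighbourhood of $\overline{B_{r'}(x)}$. Theorem 2.1 then gives
\[
M^\circ(v, x, r') = a^\circ(\mu r')\,v(x).
\]
Unwinding the definition, the left-hand side is exactly $I(u, x, r', r)$, while the right-hand side is $a^\circ(\mu r')\,M^\circ(u, x, r)$, yielding the first equality. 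Substituting the identity $M^\circ(u, x, r) = a^\circ(\mu r)\,u(x)$ from Theorem 2.1 once more produces the second equality.

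There is essentially no obstacle here; the whole content is organisational. The only point worth articulating carefully is the nesting of admissibility conditions in the previous paragraph, which is what allows Theorem 2.1 to be invoked a second time with $v$ in place of $u$.
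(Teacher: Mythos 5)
Your proof is correct and coincides with the paper's own (implicit) argument: the paper simply observes that identity \eqref{03Au} (Theorem 2.1) applies twice, first giving $M^\circ(u,\cdot,r) = a^\circ(\mu r)\,u(\cdot)$ on $D_r$ (hence panharmonicity there as a constant multiple of $u$), and then again with this function in place of $u$. Your careful unwinding of the nested admissibility conditions is exactly the bookkeeping the paper leaves to the reader.
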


Two more mean value properties of panharmonic functions are analogous to the
classical theorems of Blaschke \cite{Bla}, Priwaloff \cite{Pri} and Zaremba
\cite{Za} concerning asymptotic mean values of harmonic functions (see \cite{NV},
Sect.~9, for a discussion).

\begin{proposition}
Let $D$ be a domain in $\RR^m$, $m \geq 2$. If $u$ is panharmonic in $D$, then
\begin{equation}
\lim_{r \to +0} \frac{M^\bullet (u, x, r) - u (x)}{r^2} = \frac{\mu^2 u (x)} {2
(m+2)} \quad \mbox{for every} \ x \in D . \label{par}
\end{equation}
The assertion also holds with $M^\bullet (u, x, r)$ changed to $M^\circ (u, x, r)$
and the right-hand side term in \eqref{par} changed to $\mu^2 u (x) / (2 m)$.
\end{proposition}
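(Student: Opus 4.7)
The plan is to reduce everything to the scalar identities already established in Theorem 2.1 and Corollary 2.2 and then read off a Taylor coefficient. Indeed, those results give, for every admissible sphere and ball,
\[
M^\circ (u,x,r) - u(x) = [a^\circ (\mu r) - 1]\, u(x), \qquad M^\bullet (u,x,r) - u(x) = [a^\bullet (\mu r) - 1]\, u(x),
\]
so the two asymptotic statements of the proposition are equivalent to
\[
a^\circ (\mu r) = 1 + \frac{\mu^2}{2m}\, r^2 + O(r^4), \qquad a^\bullet (\mu r) = 1 + \frac{\mu^2}{2(m+2)}\, r^2 + O(r^4)
\]
as $r \to +0$. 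Thus no property of $u$ beyond Theorem 2.1/Corollary 2.2 is needed; the whole proof is a short computation of the first nontrivial coefficient in the power series of the two explicit functions $a^\circ$ and $a^\bullet$.

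I would carry out this expansion in the most direct way, namely by inserting the standard series
\[
I_\nu (z) = \sum_{k=0}^{\infty} \frac{(z/2)^{\nu + 2k}}{k!\, \Gamma (\nu + k + 1)}
\]
into the definitions of $a^\circ$ and $a^\bullet$ in \eqref{MM} and \eqref{MM'}. After the factors $(\mu r/2)^{(m-2)/2}$ and $(\mu r/2)^{m/2}$ cancel, one obtains
\[
a^\circ (\mu r) = \sum_{k=0}^{\infty} \frac{\Gamma (m/2)}{k!\, \Gamma (m/2 + k)}\, (\mu r/2)^{2k}, \qquad a^\bullet (\mu r) = \sum_{k=0}^{\infty} \frac{\Gamma (m/2 + 1)}{k!\, \Gamma (m/2 + k + 1)}\, (\mu r/2)^{2k}.
\]
The $k=0$ terms confirm $a^\circ (0) = a^\bullet (0) = 1$, while the $k=1$ coefficients reduce, via $\Gamma (m/2+1) = (m/2)\, \Gamma (m/2)$ and $\Gamma (m/2+2) = (m/2+1)\, \Gamma (m/2+1)$, to $1/(2m)$ and $1/(2(m+2))$ respectively after multiplying by $\mu^2/4$. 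Dividing by $r^2$ and sending $r\to +0$ delivers the two claimed limits.

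As an alternative, the coefficient for $a^\circ$ can be extracted without touching Bessel series: from the Cauchy problem \eqref{g5}, letting $r\to +0$ and using $a_r(0)=0$ (so that $r^{-1} a_r \to a_{rr}(0)$ by l'Hopital), one gets $m\, a_{rr}(0) = \mu^2$, i.e.\ $a_{rr}(0) = \mu^2/m$, which is exactly the required coefficient; the $a^\bullet$ case can then be obtained either by an analogous ODE derived from \eqref{M+M} or from the coupling \eqref{MMtil}.

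There is really no substantive obstacle: both $a^\circ$ and $a^\bullet$ are entire even functions of $r$, so the $O(r^4)$ remainder is automatic and the termwise divergence controls nothing surprising. The only mild point of care is the normalization constants $\Gamma (m/2)$ and $\Gamma (m/2+1)$ in \eqref{MM} and \eqref{MM'}, which are precisely what make $a^\circ$ and $a^\bullet$ equal to $1$ at the origin and thereby allow the difference $M^\bullet - u(x)$ (respectively $M^\circ - u(x)$) to vanish at order $r^2$ rather than at order $1$.
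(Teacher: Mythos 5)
Your proof is correct, but it is not the route the paper takes for Proposition 2.1; it is, essentially verbatim, the alternative argument that the paper itself records afterwards in Remark 2.2. The paper's own proof goes the other way around: it never invokes Theorem 2.1 or Corollary 2.2, but instead Taylor-expands $u$ at $x$,
\[ u(x+y)-u(x) = y \cdot \nabla u(x) + 2^{-1}\, y \cdot [H_u(x)]\, y + o(r^2) \, ,
\]
averages over $B_r(0)$ (respectively $S_r(0)$), notes that the first-order term has zero mean, and computes the average of the Hessian term, arriving at
\[ \lim_{r\to+0} \frac{M^\bullet(u,x,r)-u(x)}{r^2} = \frac{\nabla^2 u(x)}{2(m+2)} \, ,
\qquad
\lim_{r\to+0} \frac{M^\circ(u,x,r)-u(x)}{r^2} = \frac{\nabla^2 u(x)}{2m} \, ,
\]
that is, identities \eqref{vol} and \eqref{sph}, which hold for \emph{every} $u \in C^2(D)$, panharmonic or not; panharmonicity enters only at the last step, to replace $\nabla^2 u$ by $\mu^2 u$. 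That generality is not incidental: \eqref{vol} and \eqref{sph} are precisely what the paper reuses to prove Theorem 3.5, the converse of Proposition 2.1, where one starts from a $C^2$ function satisfying \eqref{par} and must deduce $\nabla^2 u = \mu^2 u$. Your argument --- substituting the exact identities $M^\circ = a^\circ u$ and $M^\bullet = a^\bullet u$ and reading off the $r^2$-coefficients of $a^\circ$ and $a^\bullet$ from the Bessel series, or for $a^\circ$ from the Cauchy problem \eqref{g5} --- is shorter and perfectly sound (the expansions $a^\circ(\mu r) = 1 + \mu^2 r^2/(2m) + O(r^4)$ and $a^\bullet(\mu r) = 1 + \mu^2 r^2/(2(m+2)) + O(r^4)$ are correct), but it is intrinsically one-directional: because it presupposes that $u$ is panharmonic, it contributes nothing toward the converse, whereas the paper's computation does double duty.
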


\begin{proof}
The well-known relationships between the Laplacian and asymptotic mean values (see
\cite{Br}, Ch.~2, Sect.~2) follow from Taylor's formula:
\[ u (x + y) - u (x) = y \cdot \nabla u (x) + 2^{-1} y \cdot [H_u (x)] y + o (r^2) \, .
\]
It is valid for $u \in C^2 (D)$ at $x \in D$ as $r \to 0$ provided $B_r (x)$ is
admissible and $|y| \leq r$; here $H_u (x)$ denotes the Hessian matrix of $u$ at $x$
and ``$\cdot$'' stands for the inner product in $\RR^m$. Averaging each term of the
equality with respect to $y \in B_r (0)$, one obtains
\[ M^\bullet (u, x, r) - u (x) = \frac{1}{2 |B_r|} \int_{B_r (0)} y \cdot [H_u (x)] 
y \, \D y + o (r^2) \, ,
\]
because the mean value of the first order term vanishes. It is straightforward to
calculate that
\begin{equation}
\lim_{r \to +0} \frac{M^\bullet (u, x, r) - u (x)}{r^2} = \frac{\nabla^2 u (x)} {2
(m+2)} \, , \quad x \in D . \label{vol}
\end{equation}
Now \eqref{par} follows from panharmonicity of $u$.

Similarly, averaging Taylor's formula with respect to $y \in S_r (0)$, we obtain
\begin{equation}
\lim_{r \to +0} \frac{M^\circ (u, x, r) - u (x)}{r^2} = \frac{\nabla^2 u (x)}{2 m}
\, , \quad x \in D . \label{sph}
\end{equation}
This and panharmonicity of $u$ yield the second assertion.
\end{proof}

\begin{remark}
Another proof of Proposition 2.1 is as follows. Identity \eqref{MM} holds for a
panharmonic $u$ provided $B_r (x)$ is admissible. It implies that \eqref{par} is
equivalent to
\[ \lim_{r \to +0} \frac{a^\bullet (\mu r) - 1}{(\mu r)^2} = \frac{1}{2 (m+2)} \, ,
\]
which follows from the definition of $I_{m/2}$.

In order to obtain the second assertion of Proposition 2.1 one has to use the
equality
\[ \lim_{r \to +0} \frac{a^\circ (\mu r) - 1}{(\mu r)^2} = \frac{1}{2 m} \, ,
\]
which is true by the definition of $I_{(m-2)/2}$.
\end{remark}

\subsection{Applications of Theorem 2.1}

Turning to applications of the obtained mean value property, we recall that the most
important consequences of the corresponding property in the case of harmonic
functions are the strong maximum principle and Liouville's theorem. The first
asserts that a function harmonic in a domain $D$ cannot have local maxima or minima
there; moreover, if it is continuous in $\overline D$, which is bounded, then its
maximum and minimum are attained on $\partial D$. The second theorem says that every
harmonic on $\RR^m$ function bounded below (or above) is constant.

It is clear that $u (x) = (\mu \, |x|)^{-1} \sinh (\mu \, |x|)$, which is
panharmonic in $\RR^3$, violates both these assertions; indeed, it has the local
(and global) minimum at the origin. Since the maximum principle and Liouville's
theorem, as formulated above, are not true for panharmonic functions, some extra
restrictions must be imposed in order to convert these theorems into true ones.

\vspace{2mm}

\noindent {\bf 2.1.1. The weak maximum principle.} We begin with the following
assertion concerning the behaviour of $|u|$ for a nontrivial panharmonic function
$u$.

\begin{proposition}
Let $u$ be a nonvanishing identically panharmonic function in a do\-main $D \subset
\RR^m$, $m \geq 2$. Then for every $x \in D$ there exists $y \in D$ such that $|u
(y)| > |u (x)|$.
\end{proposition}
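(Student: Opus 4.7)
The plan is to leverage the mean value identity \eqref{MM} together with the strict inequality $a^\circ(\mu r) > 1$ for $r > 0$, which was established in the proof of Corollary 2.1 via the monotonicity of $a^\circ$. The key is to transfer this strict inequality, which concerns the signed mean of $u$, into an inequality for the pointwise values of $|u|$ by invoking the triangle inequality for integrals.

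First, I would dispose of the trivial case $u(x) = 0$: since ``nonvanishing identically'' means there is some point $y \in D$ with $u(y) \neq 0$, this $y$ already satisfies $|u(y)| > 0 = |u(x)|$, and we are done. So from now on assume $u(x) \neq 0$.

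Next, fix any admissible sphere $S_r(x) \subset D$ with $r > 0$. Theorem 2.1 gives
\[
M^\circ(u, x, r) = a^\circ(\mu r)\, u(x),
\]
and taking absolute values,
\[
|M^\circ(u, x, r)| = a^\circ(\mu r)\, |u(x)| > |u(x)|,
\]
because $a^\circ(\mu r) > 1$ for $r > 0$. On the other hand, the defining formula \eqref{sm} for the spherical mean together with the triangle inequality yields
\[
|M^\circ(u, x, r)| \leq \frac{1}{|S_r|} \int_{S_r(x)} |u(y)|\, \D S_y.
\]
Combining these two estimates,
\[
\frac{1}{|S_r|} \int_{S_r(x)} |u(y)|\, \D S_y > |u(x)|,
\]
so the continuous function $|u|$ cannot be everywhere $\leq |u(x)|$ on $S_r(x)$; there must exist $y \in S_r(x) \subset D$ with $|u(y)| > |u(x)|$, as required.

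There is essentially no obstacle here: the only subtlety is the case where $u(x)$ vanishes, and this is handled immediately by the hypothesis that $u$ is not identically zero. The substance of the argument is the simple observation that identity \eqref{MM} forces the signed mean $|M^\circ(u,x,r)|$ to exceed $|u(x)|$, and the triangle inequality then passes this to the mean of $|u|$.
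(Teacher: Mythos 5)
Your proof is correct and follows essentially the same route as the paper: both rest on the mean value identity \eqref{MM} together with the strict inequality $a^\circ(\mu r) > 1$ for $r>0$, and both conclude that some point of an admissible sphere $S_r(x)$ must exceed the relevant mean. The only differences are cosmetic --- the paper normalizes the sign by assuming $u(x)\ge 0$ (considering $-u$ otherwise) instead of taking absolute values and using the triangle inequality, and it silently passes over the case $u(x)=0$ (where its strict inequality $u(x)<M^\circ(u,x,r)$ degenerates to an equality), which you handle explicitly via the nonvanishing hypothesis.
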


\begin{proof}
Without loss of generality, we assume that $u (x) \geq 0$; indeed, $-u$ should be
con\-sidered otherwise. Then Theorem~2.1 implies that $M^\circ (u, x, r) \geq 0$ for
every admis\-sible sphere $S_r (x)$ and $u (x) < M^\circ (u, x, r)$ because $a^\circ
(\mu r) > 1$. Therefore, there exists a point $y \in S_r (x) \subset D$ such that $u
(y) > u (x)$.
\end{proof}

An immediate consequence of this proposition is the weak maximum principle for
panharmonic functions.

\begin{theorem}
Let $D$ be a bounded domain in $\RR^m$, $m \geq 2$. If $u \in C^0 (\overline D)$ is
panharmonic in $D$, then
\begin{equation}
\sup_{x \in D} | u (x) | = \max_{x \in \partial D} | u (x) | \, . \label{wmp}
\end{equation}
\end{theorem}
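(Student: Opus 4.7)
The plan is to derive \eqref{wmp} directly from Proposition~2.2, which already precludes an interior maximum of $|u|$ for any nontrivial panharmonic function. First, dispose of the trivial case: if $u \equiv 0$ in $D$, then by continuity $u \equiv 0$ on $\bar D$, and both sides of \eqref{wmp} vanish. Assume henceforth that $u \not\equiv 0$ in $D$.

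Since $\bar D$ is compact and $|u|$ is continuous on it, $|u|$ attains its maximum on $\bar D$ at some point $x_0 \in \bar D$. I claim $x_0 \in \partial D$. Indeed, if $x_0 \in D$, then Proposition~2.2 supplies a point $y \in D$ with $|u (y)| > |u (x_0)|$, contradicting the maximality of $x_0$ on $\bar D$. Hence $\max_{\bar D} |u| = \max_{\partial D} |u|$. Moreover, every $z \in \partial D$ is the limit of a sequence in $D$ and $u \in C^0 (\bar D)$, so $\sup_{x \in D} |u (x)| = \max_{x \in \bar D} |u (x)|$. Combining these two equalities yields \eqref{wmp}.

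There is no real obstacle once Proposition~2.2 is in hand: the argument is simply compactness of $\bar D$ together with the absence of interior maxima, exactly parallel to the classical weak maximum principle for harmonic functions. The only point worth a word of care is the passage between the supremum over the open set $D$ and the maximum over the closed set $\bar D$, resolved by continuity up to the boundary. Note also that, in contrast to the harmonic case, one cannot replace $|u|$ by $u$ on either side of \eqref{wmp}: the example $u (x) = (\mu\, |x|)^{-1} \sinh (\mu\, |x|)$ mentioned before Proposition~2.2 already shows that a panharmonic function may attain an interior minimum, so restricting to $u$ itself (rather than $|u|$) would fail.
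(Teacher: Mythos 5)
Your proof is correct and follows essentially the same route as the paper: both arguments combine compactness of $\overline D$ and continuity of $|u|$ with Proposition~2.2 to rule out an interior point attaining the supremum, the only cosmetic difference being that the paper uses a maximizing sequence where you invoke the extreme value theorem directly (and you additionally make the trivial case $u \equiv 0$ explicit). No gap; the concluding remark about the necessity of $|u|$ rather than $u$ is a pertinent observation consistent with the paper's example $(\mu\,|x|)^{-1} \sinh (\mu\,|x|)$.
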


\begin{proof}
In the case of $u$ nonvanishing identically, we take a sequence
$\{x_k\}_{k=1}^\infty \subset D$ such that
\[ |u (x_k)| \to \sup_{x \in D} | u (x) | \ \ \mbox{as} \ k \to \infty .
\]
Since $D$ is bounded, $\{x_k\}_{k=1}^\infty$ has a limit point in $\overline D$,
say $x_0$, and $|u (x_0)| = \sup_{x \in D} | u (x) |$ by continuity. Moreover, $x_0
\in \partial D$; indeed, if $x_0 \in D$, then there exists $y \in D$ such that $|u
(y)| > |u (x_0)|$ by Proposition~2.2, but this is impossible. Now \eqref{wmp}
follows from the equality $\sup_{x \in D} | u (x) | = \max_{x \in \overline D} | u
(x) |$ valid becaus $u \in C^0 (\overline D)$.
\end{proof}

Here, only the mean value property is used for proving this principle for
panharmonic functions. Of course, the approach to proving this principle that is
applicable to general elliptic equations (see \cite{GT}, Sect.~3.1) is valid for
these functions as well, but our aim was to use a minimal tool. An immediate
consequence of Theorem~2.2 (see \cite{NC}, p.~260, for the original formulation) is
the uniqueness of a solution to the Dirichlet problem for equation \eqref{Hh} in a
bounded domain as well as the continuous dependence of solutions to this problem on
boundary data.

\vspace{2mm}

\noindent {\bf 2.1.2. Liouville's theorem.} In the whole $\RR^m$, self-similarity
allows us to restrict ourselves to the equation:
\begin{equation}
\nabla^2 u - u = 0 \, ; \label{Hh1}
\end{equation}
indeed, it follows from \eqref{Hh} by a change of variables.

\begin{theorem}
Let $u$ be a solution of \eqref{Hh1} on $\RR^m$. If the inequality
\begin{equation}
| u (x) | \leq C (1 + |x|)^n \ \ holds \ for \ all \ x \in \RR^m \label{N}
\end{equation}
with some $C > 0$ and a nonnegative integer $n$, then $u$ vanishes identically.
\end{theorem}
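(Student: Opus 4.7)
The plan is to apply Theorem~2.1 at an arbitrary point $x \in \RR^m$, estimate the spherical mean $M^\circ(u,x,r)$ by the polynomial growth bound \eqref{N}, and then exploit the exponential growth of the coefficient $a^\circ(r)$ by sending $r \to \infty$. Since $\mu = 1$ in \eqref{Hh1}, Theorem~2.1 gives
\[
a^\circ(r)\, u(x) = M^\circ(u, x, r) = \frac{1}{\omega_m r^{m-1}} \int_{S_r(x)} u(y) \, \D S_y
\]
for every $r > 0$ (every sphere is admissible because the domain is all of $\RR^m$).

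Next I would bound the right-hand side in absolute value. For $y \in S_r(x)$ one has $|y| \leq |x| + r$, hence the hypothesis \eqref{N} yields $|u(y)| \leq C(1 + |x| + r)^n$, so that
\[
|a^\circ(r)|\, |u(x)| \leq C (1 + |x| + r)^n.
\]
Now the key asymptotic input is \eqref{asym_I}: it shows that as $r \to \infty$,
\[
a^\circ(r) = \Gamma\!\left(\tfrac{m}{2}\right) \frac{I_{(m-2)/2}(r)}{(r/2)^{(m-2)/2}} \sim c_m \, \frac{\E^{r}}{r^{(m-1)/2}}
\]
for some positive constant $c_m$ depending only on $m$. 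Thus $a^\circ(r)$ grows exponentially in $r$.

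Dividing the preceding estimate by $a^\circ(r)$ and letting $r \to \infty$ with $x$ fixed, the right-hand side $C(1+|x|+r)^n / a^\circ(r)$ tends to zero since the denominator grows exponentially while the numerator is polynomial of degree $n$. Therefore $u(x) = 0$, and since $x \in \RR^m$ was arbitrary, $u$ vanishes identically. There is no real obstacle here: the only subtle point is verifying that the combined polynomial prefactors in the asymptotics of $a^\circ$ (the $r^{-(m-1)/2}$ from \eqref{asym_I} together with the $(r/2)^{-(m-2)/2}$ from the definition) do not cancel the exponential, which \eqref{asym_I} immediately confirms.
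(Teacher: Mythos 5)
Your proposal is correct and is essentially the paper's own argument: both apply the spherical mean value identity of Theorem~2.1 with $\mu = 1$ on the (automatically admissible) spheres $S_r(x)$, bound the mean by the polynomial growth hypothesis \eqref{N}, and then use the asymptotics \eqref{asym_I}, which give $a^\circ(r) \sim c_m\, \E^r / r^{(m-1)/2}$, to conclude $u(x) = 0$ by letting $r \to \infty$. There is nothing to add.
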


\begin{proof}
Substituting the asymptotic formula \eqref{asym_I} in the expression for $a^\circ
(r)$, we obtain
\[ a^\circ (r) = \frac{\Gamma (m / 2) \, 2^{(m-3)/2}}{\sqrt \pi}
\frac{\E^r}{r^{(m-1)/2}} \left[ 1 + O (r^{-1}) \right] \ \ \mbox{as} \ r \to \infty \, .
\]
Using this and \eqref{N} in identity \eqref{03Au} with $\mu = 1$, we see that the
inequality
\[ | u (x) | \leq \widetilde C (1 + |x| + r)^n \frac{r^{(m-1)/2}}{\E^r} 
\]
holds with some $\widetilde C > 0$ for all $x \in \RR^m$ and all $r > 0$. Letting $r
\to \infty$, the required assertion follows.
\end{proof}

Inequality \eqref{N} with any (arbitrarily large) $n > 0$ implies that a solution of
equation \eqref{Hh1} is trivial. On the other hand, if the same inequality is
imposed on a harmonic function, then it is a (harmonic) polynomial, whose degree is
less than or equal to $n$; see \cite{Vl}, p.~290.

{\centering \section{Converse of mean value properties} }

In the classical Kellogg's monograph \cite{K}, the section, that follows the proof
of the Gauss theorem on the spherical arithmetic means for harmonic functions,
begins with the sentence.
\begin{quote}
The property of harmonic functions given by Gauss' theorem is so simple and
striking, that it is of interest to inquire what properties functions have which
are, as we shall express it, their own arithmetic means on the surface of spheres.
\end{quote}
Then Kellogg proves the converse of the arithmetic mean theorem due to Koebe (1906),
and adds: ``This theorem will be of repeated use to us.'' Its analogue for
panharmonic functions was obtained only in 2021.

\begin{theorem}[\cite{Ku1}]
Let $D$ be a bounded domain in $\RR^m$. If identity \eqref{MM} with $\mu > 0$ is
fulfilled for $u \in C^0 (D)$ at every $x \in D$ and for all $r \in (0, r (x))$,
where $B_{r (x)} (x)$ is admissible, then $u$ is $\mu$-panharmonic in $D$.

If instead of \eqref{MM} identity \eqref{MM'} is fulfilled for $u \in C^0 (D)$ in
the same way as above, then $u$ is $\mu$-panharmonic in $D$.
\end{theorem}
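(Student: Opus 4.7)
The plan is to follow a two-step strategy reminiscent of Koebe's classical converse of Gauss's arithmetic mean theorem: first bootstrap the regularity of $u$ from $C^0$ to $C^\infty$ using the mean value identity itself, and then extract the differential equation $\nabla^2 u -\mu^2 u=0$ by matching the leading-order asymptotics of $M^\circ(u,x,r)$ as $r\to+0$.

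For the regularity step, fix any open $V\subset D$ with $\overline V\subset D$ and choose $\varepsilon>0$ so small that $B_\varepsilon(x)$ is admissible and $\varepsilon<r(x)$ for every $x\in V$. Let $\phi\in C_c^\infty(\RR^m)$ be nonnegative and radial, $\phi(y)=\psi(|y|)$, with support in $\overline{B_\varepsilon(0)}$. Passing to polar coordinates and substituting the hypothesis \eqref{MM} yields
\begin{equation*}
(\phi \ast u)(x) \;=\; \omega_m\,u(x)\int_0^\varepsilon \psi(\rho)\,\rho^{m-1}\,a^\circ(\mu\rho)\,\D \rho \;=:\; C(\phi)\,u(x), \qquad x\in V.
\end{equation*}
Since $a^\circ\ge 1$, one may pick $\psi\ge 0$, $\psi\not\equiv 0$ so that $C(\phi)\neq 0$; then $u=C(\phi)^{-1}(\phi\ast u)$ coincides on $V$ with a $C^\infty$ function, because $\phi\in C_c^\infty$ makes $\phi\ast u$ smooth wherever it is defined. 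As $V$ is arbitrary, $u\in C^\infty(D)$.

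With $u$ smooth in hand, the Taylor-expansion argument from the proof of Proposition~2.1 applies verbatim and yields the identity \eqref{sph}. Substituting the hypothesis \eqref{MM} into the left-hand side of \eqref{sph} reduces that limit to $u(x)\lim_{r\to+0}(a^\circ(\mu r)-1)/r^2=\mu^2 u(x)/(2m)$, the last equality being the asymptotic recorded in the remark after Proposition~2.1. Equating the two evaluations gives $\nabla^2 u(x)=\mu^2 u(x)$ pointwise on $D$, i.e.\ the desired panharmonicity.

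The second assertion reduces cleanly to the first. Writing \eqref{MM'} in polar-coordinate form as $\omega_m\int_0^r \rho^{m-1}M^\circ(u,x,\rho)\,\D \rho = \frac{\omega_m r^m}{m}\,a^\bullet(\mu r)\,u(x)$ and differentiating in $r$, with the Bessel identity $[z^\nu I_\nu(z)]'=z^\nu I_{\nu-1}(z)$ used to evaluate $[r^m a^\bullet(\mu r)]'$, one recovers \eqref{MM} for the same range of $r$; the first part then applies. The main obstacle, to my mind, lies in the regularity bootstrap: one must guarantee that $C(\phi)\neq 0$ can be arranged so that the $C^\infty$-smoothness of $\phi\ast u$ genuinely transfers back to $u$. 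This is immediate from the strict positivity of $a^\circ$, so the whole converse ultimately rests on Theorem~2.1 combined with the elementary small- and large-argument asymptotics of the modified Bessel functions already collected in the excerpt.
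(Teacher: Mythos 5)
Your proposal is correct, and its first half coincides with the paper's own argument: the convolution identity $(\phi\ast u)(x)=C(\phi)\,u(x)$ with $C(\phi)>0$ is exactly the paper's mollifier trick (attributed there to Mikhlin's proof of Koebe's theorem), namely $u(x)\,c(\mu,\epsilon)=\int_D u(y)\,\omega_\epsilon(|y-x|)\,\D y$ with $c(\mu,\epsilon)>0$, giving $u\in C^\infty(D)$. Where you genuinely diverge is in extracting the equation once smoothness is known. The paper passes from \eqref{MM} to \eqref{MM'}, applies the Laplacian to the volume integral, converts it by Green's first formula into $\omega_m r^{m-1}\partial_r M^\circ(u,x,r)$, evaluates $\partial_r M^\circ$ exactly via the Bessel identity \eqref{diff}, and concludes that $\int_{B_r(x)}[\nabla^2 u-\mu^2 u]\,\D y=0$ for every small $r$; continuity of the integrand then forces the equation at $x$. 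You instead match the $O(r^2)$ coefficients as $r\to+0$: Taylor expansion gives \eqref{sph}, the power series of $I_{(m-2)/2}$ gives $(a^\circ(\mu r)-1)/(\mu r)^2\to 1/(2m)$ (the limit recorded in Remark~2.2), and equating the two evaluations yields $\nabla^2 u=\mu^2 u$. In effect you reduce the theorem, after the bootstrap, to the converse asymptotic mean value property (Theorem~3.5 of the paper); this is more local and elementary, using only the $r\to0$ behaviour and no Green's formula, whereas the paper's exact computation stays within closed-form Bessel identities and exhibits along the way the weak-form fact that $\nabla^2u-\mu^2u$ integrates to zero over every small ball. Your handling of the second assertion --- writing \eqref{MM'} as $\omega_m\int_0^r\rho^{m-1}M^\circ(u,x,\rho)\,\D\rho=\frac{\omega_m r^m}{m}a^\bullet(\mu r)\,u(x)$ and differentiating in $r$ to recover \eqref{MM} --- is simply the explicit form of the equivalence of \eqref{MM} and \eqref{MM'} that the paper invokes through Remark~2.1, so that part is the same in substance. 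One shared point of care: your choice of a uniform $\varepsilon$ with $\varepsilon<r(x)$ on $V$ (like the paper's integration over $(0,\epsilon)$ for $x\in D_{2\epsilon}$) tacitly reads $r(x)$ as the inradius at $x$, i.e.\ \eqref{MM} is assumed for all admissible spheres; under that reading, which is the paper's, both arguments are complete.
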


\begin{proof}
First, we have to show that $u$ is smooth for which purpose a trick applied by
Mikhlin in his proof of Koebe's theorem is helpful; see \cite{M}, Ch.~11, Sect.~7.
It is based on using the mollifier $\omega_\epsilon (|y - x|) = \omega_\epsilon
(r)$; see its properties in \cite{M}, Ch.~1, Sect.~1.

Let $\epsilon > 0$ be small and let $D' = D_{2 \epsilon}$ (the definition of this
domain is given prior to Corollary~2.3). Assuming that $x \in D'$, we multiply
\eqref{MM} by $\omega_\epsilon$, thus obtaining
\[ u (x) \, a^\circ (\mu r) \, |S_r| \, \omega_\epsilon (r) = 
\omega_\epsilon (r) \int_{S_r (x)} \!\! u (y) \, \D S_y \, .
\]
Now, integration with respect to $r$ over $(0, \epsilon)$ yields
\[ u (x) \, c (\mu, \epsilon) = \int_{B_\epsilon (x)} \!\! u (y) \, 
\omega_\epsilon (|y - x|) \, \D y = \int_{D} \!\! u (y) \, \omega_\epsilon (|y -
x|) \, \D y \, .
\]
Here the last equality is valid because $x \in D'$, whereas $\omega_\epsilon (|y -
x|)$ vanishes outside $B_\epsilon (x)$. Also,
\[ c (\mu, \epsilon) = \int_0^\epsilon a^\circ (\mu r) \, |S_r| \, \omega_\epsilon
(r) \, \D r > 0 \, ,
\]
because $a^\circ (\mu r) > 1$. Since $\omega_\epsilon$ is infinitely differentiable,
the obtained representation shows that $u \in C^\infty (D')$. However, $\epsilon$ is
arbitrarily small, and so $u \in C^\infty (D)$.

Now we are in a position to demonstrate that $u$ is panharmonic in $D$. Since
\eqref{MM} holds for every $x \in D$ and all $r \in (0, r (x))$ provided $B_{r (x)}
(x)$ is admissible, identity \eqref{MM'} holds as well. Applying the Laplacian to
the integral on the left-hand side of the latter identity, we obtain
\[ \int_{|y| < r} \!\! \nabla^2_x \, u (x+y) \, \D y = \int_{|y| = r} \!\! \nabla_x \,
u (x+y) \cdot \frac{y}{r} \, \D S_y \, .
\]
Here the equality is a consequence of Green's first formula. By changing variables
this can be written as follows:
\[ r^{m-1} \frac{\partial}{\partial r} \int_{|y|=1} \!\! u (x + r y) \, 
\D S_y = |S_1 (0)| r^{m-1} \frac{\partial}{\partial r} M^\circ (u, x, r) \, .
\]
However, $M^\circ (u, x, r) = a^\circ (\mu r) \, u (x)$, and so the second formula
\eqref{diff} yields that
\[ \frac{\partial}{\partial r} M^\circ (u, x, r) = - \frac{\mu I_{m/2} (\mu r)}
{(\mu r / 2)^{(m-2)/2}} \, u (x) \, .
\]
Combining the above considerations and \eqref{MM'}, we find that for every $x \in D$
the equality
\[ \int_{|y| < r} \!\! [ \nabla^2_x \, u - \mu^2 u ] \, (x+y) \, \D y = 0 \ \ 
\mbox{holds for all} \ r \in (0, r (x)) \, .
\]
Thus, every ball $B_r (x)$ contains a point $y (r, x)$ such that $[ \nabla^2 \, u -
\mu^2 u ] \, (y (r, x)) = 0$. Since $y (r, x) \to x$ as $r \to 0$, we conclude by
continuity that $u$ satisfies equation \eqref{Hh} at every $x \in D$, that is, $u$
is panharmonic in $D$.

Let identity \eqref{MM'} hold for $u$ instead of \eqref{MM}. Since these two
identities are equivalent (see Remark~2.1), \eqref{MM} holds for $u$ as well. Then
the previous considerations yield the assertion.
\end{proof}

\begin{corollary}
Let $D \subset \RR^m$, $m \geq 2$, be a bounded domain, and let $u \in C^0 (D)$. If
for every $x \in D$ there exists $r (x)$ such that $B_{r (x)} (x)$ is admissible and
$M^\circ (u,x,r) / a^\circ (\mu r)$ does not depend on $r \in (0, r (x))$, then $u$
is $\mu$-panharmonic in $D$.
\end{corollary}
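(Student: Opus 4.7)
The plan is to reduce this immediately to Theorem 3.1 (the converse mean value theorem proved just above) by showing that the hypothesis forces identity \eqref{MM} to hold.

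First I would fix an arbitrary $x \in D$ and set
\[
c(x) = \frac{M^\circ (u, x, r)}{a^\circ (\mu r)},
\]
which by hypothesis is a well-defined number independent of $r$ for $r \in (0, r(x))$. The task is to identify this constant with $u(x)$.

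Next I would pass to the limit $r \to +0$. Since $u \in C^0(D)$, a standard uniform-continuity argument on the compact ball $\overline{B_{r(x)}(x)} \subset D$ gives $M^\circ(u, x, r) \to u(x)$ as $r \to +0$; this is the limit relation already recorded after formula \eqref{sm}. Combined with $a^\circ(0) = 1$ (the first Cauchy initial condition in \eqref{g5}) and continuity of $a^\circ$, this yields $c(x) = u(x)$. Consequently
\[
M^\circ (u, x, r) = a^\circ (\mu r)\, u(x)
\]
for every $x \in D$ and every $r \in (0, r(x))$, which is precisely identity \eqref{MM} under the hypotheses of Theorem 3.1.

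Applying Theorem 3.1 then delivers the conclusion that $u$ is $\mu$-panharmonic in $D$. There is no real obstacle here: the entire content of the corollary is the observation that knowing the $r$-dependence of $M^\circ$ up to a function of $x$ alone is enough, since the value of that function of $x$ is pinned down by the limit $r \to +0$, after which Theorem 3.1 does all the work. The only point requiring mild care is justifying $M^\circ(u, x, r) \to u(x)$ from $u \in C^0(D)$, which follows from uniform continuity of $u$ on $\overline{B_{r(x)}(x)}$.
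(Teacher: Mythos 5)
Your proposal is correct and follows essentially the same route as the paper: identify the constant $M^\circ(u,x,r)/a^\circ(\mu r)$ with its limit as $r \to +0$, which equals $u(x)$ since $a^\circ(0)=1$ and $M^\circ(u,x,r)\to u(x)$, and then invoke Theorem~3.1. The only cosmetic difference is that the paper justifies the limit via the mean value theorem for integrals (writing $M^\circ(u,x,r)=u(x_0(r))$ for some $x_0(r)\in S_r(x)$), while you appeal directly to the continuity of the spherical mean recorded after \eqref{sm}; both are equally valid.
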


\begin{proof}
According to the mean value theorem for integrals, for every $x \in D$ and each $r
\in (0, r (x))$ there exists $x_0 (r) \in S_r (x)$ such that $ M^\circ (u,x,r) = u
(x_0 (r))$, and so
\[ M^\circ (u,x,r) / a^\circ (\mu r) = u (x_0 (r)) / a^\circ (\mu r) \, .
\]
Since this continuous function of $r$ is constant on $(0, r (x))$, it is equal to
its limit as $r \to 0$. Since $a^\circ (\mu r) \to 1$ and $u (x_0 (r)) \to u (x)$ as
$r \to 0$, we obtain that \eqref{MM} holds for every $x \in D$ and all $r \in (0, r
(x))$. Then Theorem~3.1 yields the assertion.
\end{proof}

Now, we prove the converse of identity \eqref{MMtil}, which generalizes the result
of Beckenbach and Reade \cite{BR} for harmonic functions; it was announced in
\cite{Ku4} without proof.

\begin{theorem}
Let $D \subset {\RR}^m$, $m \geq 2$, be a bounded domain, and let $u \in C^0 (D)$.
If identity \eqref{MMtil} holds for every $x \in D$ and all $r \in (0, r (x))$,
where $r (x) > 0$ is such that the ball $B_{r (x)} (x)$ is admissible, then $u$ is
$\mu$-panharmonic in~$D$.
\end{theorem}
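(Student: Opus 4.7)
The plan is to convert the hypothesis \eqref{MMtil} into a first-order linear ODE for the function $r \mapsto r^m M^\bullet(u,x,r)$ (with $x \in D$ fixed), recognize an explicit solution of this ODE, use an initial-condition matching at $r = 0$ to identify the free constant, and finally invoke Theorem~3.1 to conclude panharmonicity.

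First, I would fix $x \in D$ and exploit the universal identity \eqref{M+M}, which is valid for any $u \in C^0(D)$, namely
\[ r^m M^\bullet(u,x,r) = m \int_0^r t^{m-1} M^\circ(u,x,t)\, \D t \, . \]
Setting $h(r) := r^m M^\bullet(u,x,r)$ and differentiating yields $h'(r) = m r^{m-1} M^\circ(u,x,r)$. Rewriting the hypothesis as $M^\circ = \bigl( a^\circ(\mu r)/a^\bullet(\mu r)\bigr) M^\bullet$ and substituting into this expression for $h'$, I obtain the first-order linear ODE
\[ h'(r) = \frac{m\, a^\circ(\mu r)}{r\, a^\bullet(\mu r)}\, h(r) \, , \qquad r \in (0, r(x)) \, . \]

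Next, I would observe that $h_0(r) := r^m a^\bullet(\mu r)$ satisfies the same ODE. This is precisely the content of formula~1.11.1.5 of \cite{PBM} employed in the proof of Corollary~2.2: it furnishes the primitive $m \int_0^r t^{m-1} a^\circ(\mu t)\, \D t = r^m a^\bullet(\mu r)$, whence $h_0'(r) = m r^{m-1} a^\circ(\mu r)$, so that $h_0'/h_0$ matches the coefficient in the ODE. Since $h_0(r) > 0$ on $(0, r(x))$ (because $a^\bullet(\mu r) \geq 1$), the ratio $\phi(r) := h(r)/h_0(r)$ is well-defined on this interval and satisfies $\phi'(r) \equiv 0$; hence $\phi$ is constant.

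To identify the constant, I would let $r \to 0^+$: by continuity of $u$ at $x$ we have $M^\bullet(u,x,r) \to u(x)$, while $a^\bullet(\mu r) \to 1$, so
\[ \phi(r) = \frac{M^\bullet(u,x,r)}{a^\bullet(\mu r)} \longrightarrow u(x) \, . \]
Consequently, $M^\bullet(u,x,r) = a^\bullet(\mu r)\, u(x)$ for every $x \in D$ and every $r \in (0, r(x))$; this is identity \eqref{MM'}. Applying the second assertion of Theorem~3.1 then gives that $u$ is $\mu$-panharmonic in $D$.

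The step I expect to require the most care is the matching at the singular endpoint $r = 0$: the coefficient $m\, a^\circ(\mu r)/(r\, a^\bullet(\mu r))$ behaves like $m/r$ as $r \to 0^+$, so all solutions of the homogeneous ODE have leading behavior $C r^m$, and it is only the asymptotics $h(r) \sim u(x)\, r^m$ and $h_0(r) \sim r^m$ as $r \to 0^+$ that force the correct choice $C = u(x)$. Once this is verified, the remainder of the argument is routine and reduces everything to Theorem~3.1.
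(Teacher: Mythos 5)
Your proof is correct, and structurally it follows the same route as the paper's: both arguments turn \eqref{MMtil} into a first-order linear ODE in $r$ for the volume mean (via the relation $h'(r) = m r^{m-1} M^\circ (u,x,r)$ for $h(r) = r^m M^\bullet (u,x,r)$, i.e.\ the differentiated form of \eqref{M+M}), solve that ODE, fix the constant of integration by letting $r \to 0$, and conclude from \eqref{MM'} via the second assertion of Theorem~3.1. The one genuine difference is how the ODE is solved. The paper rewrites the equation so that both sides are logarithmic derivatives, using the recurrence $I_{\nu-1}(z) = I_\nu'(z) + (\nu/z) I_\nu(z)$ to recognize the right-hand side as the derivative of $\log \left[ I_{m/2}(\mu r)/r^{m/2} \right]$, and then integrates over $(\epsilon, \rho)$ before letting $\epsilon \to 0$. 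You instead exhibit the explicit solution $h_0(r) = r^m a^\bullet (\mu r)$ of the same ODE (justified by the same formula 1.11.1.5 of \cite{PBM} that underlies Corollary~2.2) and observe that $h/h_0$ has zero derivative. Your variant buys a small but real gain in robustness: the paper's logarithmic-derivative step implicitly divides by $M^\bullet (u,x,r)$, which has not been shown to be nonvanishing (a point needing a patch, e.g.\ by ODE uniqueness, when $u(x)=0$), whereas you divide only by $h_0(r) \geq r^m > 0$. The endpoint matching at $r=0$, which you flag as the delicate step, is handled the same way in both proofs, via $M^\bullet (u,x,r) \to u(x)$ and $a^\bullet (\mu r) \to 1$.
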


\begin{proof}
Let $\rho > 0$ be sufficiently small. If $r \in (0, \rho)$, then $M^\bullet (x,r,u)$
is defined for every $x$, which belongs to an open subset of $D$ depending on the
smallness of $\rho$. Moreover, $M^\bullet (x,r,u)$ is differentiable with respect to
$r$ and
\[ \partial M^\bullet (x,r,u) / \partial r = m r^{-1} [ M^\circ (x,r,u) -
M^\bullet (x,r,u) ] \quad \mbox{for} \ r \in (0, \rho) .
\]
Since
\begin{equation*}
\frac{a^\bullet (\mu r)}{a^\circ (\mu r)} = \frac{m I_{m/2} (\mu r)}{\mu r
I_{(m-2)/2} (\mu r)} \, , \label{18}
\end{equation*}
the previous relation takes the form
\[ \frac{\partial M^\bullet / \partial r}{M^\bullet} = \mu \frac{I_{(m-2)/2} (\mu r)}
{I_{m/2} (\mu r)} - \frac{m}{r} = \mu \frac{I_{m/2}' (\mu r)} {I_{m/2} (\mu r)} -
\frac{m}{2 r} \, ,
\]
where the last equality is a consequence of the recurrence formula (\cite{Wa},
p.~79):
\[ I_{\nu-1} (z) = I_{\nu}' (z) + \frac{\nu}{z} I_{\nu} (z) \, .
\]

The equation for $M^\bullet$ has logarithmic derivatives on both sides. Therefore,
integrating with respect to $r$ over the interval $(\epsilon, \rho)$, we obtain,
after letting $\epsilon \to 0$, relation \eqref{MM'} with $r$ changed to $\rho$.
Indeed, shrinking $B_\epsilon (x)$ to its centre on the left-hand side, we see that
$M^\bullet (x,\epsilon,u) \to u (x)$ because $u \in C^0 (D)$, and this takes place
for every $x$ in an arbitrary closed subset of $D$. By letting $\epsilon \to 0$ on
the right-hand side, the factor $\Gamma \left( \frac{m}{2} + 1 \right)$ arises due
to the leading term of the power expansion of $I_{m/2}$. Thus we have
\[ M^\bullet (x,\rho,u) = a^\bullet (\mu \rho) \, u (x)
\]
for every $x \in D$ and all admissible $\rho$. Hence $u$ is $\mu$-panharmonic in $D$
by the second assertion of Theorem~3.1.
\end{proof}

\subsection{Restricted mean value property}

There is a long series of publications dealing with the so-called restricted mean
value properties that characterize harmonicity; see the survey article \cite{NV},
Sections~5 and 6. The following definition is accommodated for panharmonic
functions.

\begin{definition}
A real-valued function $f$ defined on an open set $G \subset \RR^m$ is said to have
the restricted mean value property with respect to spheres if for each $x \in G$
there exists a single sphere centred at $x$ of radius $r (x)$ such that $B_{r (x)}
(x) \subset G$ and identity \eqref{MM} holds for $f$ with $r = r (x)$.
\end{definition}


\begin{theorem}
Let $D \subset \RR^m$, $m \geq 2$, be a bounded domain such that the Dirichlet
problem for equation \eqref{Hh} is soluble in $C^2 (D) \cap C^0 (\overline D)$ for
every continuous function given on $\partial D$. If $u \in C^0 (\overline D)$ has
the restricted mean value property in $D$ with respect to spheres, then $u$ is
$\mu$-panharmonic in $D$.
\end{theorem}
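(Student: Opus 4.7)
The plan is to reduce the assertion to a one-sided extremum argument by subtracting off a genuine panharmonic function. Using the solvability hypothesis, I would first let $v \in C^2(D) \cap C^0(\overline{D})$ be the solution of the Dirichlet problem for \eqref{Hh} with boundary data $u|_{\partial D}$, and set $w = u - v \in C^0(\overline{D})$. Then $w$ vanishes on $\partial D$. Since $v$ is $\mu$-panharmonic in $D$, Theorem~2.1 gives $M^\circ(v, x, r) = a^\circ(\mu r)\, v(x)$ for every admissible sphere; in particular this holds for the specific radius $r(x)$ supplied by the restricted mean value property for $u$. Subtracting, we obtain the pointwise identity
\begin{equation*}
M^\circ(w, x, r(x)) = a^\circ(\mu r(x))\, w(x) \quad \text{for every } x \in D.
\end{equation*}

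Next, I would show that $w \equiv 0$ by a maximum-principle argument that exploits the strict inequality $a^\circ(\mu r) > 1$ for $r > 0$ (established in the proof of Corollary~2.1). By compactness, $w$ attains its maximum $M$ on $\overline{D}$. Suppose for contradiction that $M > 0$; since $w = 0$ on $\partial D$, the maximum is attained at some interior point $x_0 \in D$. Because the closed ball $\overline{B_{r(x_0)}(x_0)}$ lies in $D$ and $w \le M$ throughout $\overline{D}$, the spherical mean satisfies $M^\circ(w, x_0, r(x_0)) \le M$. On the other hand, the identity above yields $M^\circ(w, x_0, r(x_0)) = a^\circ(\mu r(x_0))\, M > M$, a contradiction. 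Applying the same argument to $-w$ (or equivalently examining the minimum of $w$, where $w(x_0) < 0$ and $a^\circ > 1$ now forces $M^\circ(w, x_0, r(x_0)) < w(x_0)$, contradicting $M^\circ \ge \min w$) shows that $\min_{\overline{D}} w \ge 0$. Hence $w \equiv 0$, which means $u = v$ is $\mu$-panharmonic in $D$.

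The substantive step is the reduction to $w$: having subtracted a panharmonic function with the right boundary values, one obtains a function vanishing on $\partial D$ whose restricted mean value identity, combined with the strict inequality $a^\circ(\mu r) > 1$, instantly kills any positive maximum or negative minimum in the interior. This is where the panharmonic case is genuinely easier than the harmonic analogue: in the Laplace setting one has $a^\circ \equiv 1$, the same argument yields only that $w$ equals its extremum on a whole sphere around $x_0$, and a connectedness propagation argument is required. Here the strict gap provided by $a^\circ > 1$ makes the contradiction immediate, so the only real hypothesis being invoked (besides Theorem~2.1) is the solvability of the Dirichlet problem, which is exactly what the theorem assumes.
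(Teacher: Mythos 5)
Your proposal is correct and follows essentially the same route as the paper: the paper likewise solves the Dirichlet problem with boundary data $u|_{\partial D}$, observes that the difference satisfies the restricted mean value identity and vanishes on $\partial D$, and kills it with a maximum-principle argument resting on the strict inequality $a^\circ(\mu r) > 1$ (the paper packages that step as the weak maximum principle \eqref{max}, proved by the reasoning of Proposition~2.2 and Theorem~2.2, rather than inlining the interior-extremum contradiction as you do). Your closing remark about why the strict gap $a^\circ > 1$ spares one the connectedness propagation needed in the harmonic case is also accurate.
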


\vspace{-2mm}

\begin{proof}
First, let us show that the theorem's assumptions yield that\\[-3mm]
\begin{equation}
\max_{x \in \overline D} |u (x)| = \max_{x \in \partial D} |u (x)| \, . \label{max}
\end{equation}
Reasoning by analogy with the proof of Proposition~2.2, we see that the restricted
mean value property implies that for every $x \in D$ there exists $y \in D$ such
that $|u (y)| > |u (x)|$. Then the considerations used in the proof of Theorem~2.2
yield \eqref{max}.

Let $f$ denote the trace of $v$ on $\partial D$; then there exists $u_0 \in C^0
(\overline D)$ solving the Dirichlet problem for equation \eqref{Hh} in $D$ with $f$
as the boundary data. Hence $u_0$ satisfies identy \eqref{MM} for all $x \in D$ and
all admissible $S_r (x)$, and so the restricted mean value property is valid for $u
- u_0$. Then the weak maximum principle \eqref{max} holds for $u - u_0$, thus
implying that $u \equiv u_0$ in $D$ because $u \equiv u_0$ on $\partial D$. Then $u$
also satisfies \eqref{MM} for every $x \in D$ and all admissible $S_r (x)$. Now,
Theorem~3.1 yields that $u$ is panharmonic in $D$.
\end{proof}

The question about domains in which the Dirichlet problem for an elliptic equation
is soluble has a long history going back to George Green's \textit{Essay on the
Application of Mathematical Analysis to the Theories of Electricity and Magnetism}
published in 1828, where this problem for the Laplace equation was posed for the
first time. The final answer when the Dirichlet problem for harmonic functions has
a solution was given by Wiener \cite{NW} in 1924; the notion of capacity was
introduced for this purpose.

The class of bounded domains such that the Dirichlet problem is soluble is the same
for the modified Helmholtz equation and for the Laplace equation. This follows from
the results of Oleinik \cite{O} and Tautz \cite{Ta}; they demonstrated independently
and published in 1949 that this fact about the solubility of the Dirichlet problem
is a common characteristic which is true for elliptic equations of rather general
form (see the monograph \cite{Mi}, Ch.~IV, Sect.~28, for a review of related
papers).

\vspace{-2mm}

\subsection{A function with panharmonic means is panharmonic itself}

Theorem 3.3 allows us to prove the following converse of Corollary~2.3.

\begin{theorem}
Let $D \subset \RR^m$, be a bounded domain in which the Dirichlet problem for
equation \eqref{Hh} is soluble. If $u \in C^2 (D) \cap C^0 (\overline D)$ has
$\mu$-panharmonic $M^\circ (u, \cdot, r)$ in $D_r$ for all $r \in (0, r_*)$, where
$r_* > 0$ is such that $D_{r_*} \neq \emptyset$, then $u$ is $\mu$-panharmonic in
$D$.
\end{theorem}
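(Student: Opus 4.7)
The strategy is to reduce to the converse already established in Theorem~3.1: it suffices to show that $u$ itself satisfies the spherical mean value identity \eqref{MM} on some nonempty interval $(0, r(x))$ about each $x \in D$.

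For every fixed $r \in (0, r_*)$ the function $v_r := M^\circ(u, \cdot, r)$ is $\mu$-panharmonic in $D_r$ by hypothesis, so Theorem~2.1 applied to $v_r$ gives
\[ I(u, x, r', r) = M^\circ(v_r, x, r') = a^\circ(\mu r')\, M^\circ(u, x, r) \]
for every $x$ with $\overline{B_{r'}(x)} \subset D_r$, i.e.\ $x \in D_{r+r'}$. Choosing also $r' \in (0, r_*)$ and using the symmetry $I(u, x, r', r) = I(u, x, r, r')$ noted after \eqref{im'} to exchange the roles of $r$ and $r'$, we arrive at
\[ a^\circ(\mu r')\, M^\circ(u, x, r) = a^\circ(\mu r)\, M^\circ(u, x, r') \]
whenever $r, r' \in (0, r_*)$ and $x \in D_{r+r'}$.

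Now fix $x \in D$ and $r$ with $0 < r < \min(\mathrm{dist}(x, \partial D), r_*)$, and let $r' \to +0$ in the preceding identity; since $u \in C^0(\overline D)$, one has $M^\circ(u, x, r') \to u(x)$ and $a^\circ(\mu r') \to 1$, which yields $M^\circ(u, x, r) = a^\circ(\mu r)\, u(x)$. Setting $r(x) := \tfrac12 \min(\mathrm{dist}(x, \partial D), r_*)$ makes $B_{r(x)}(x)$ admissible and gives \eqref{MM} on the whole interval $(0, r(x))$, so Theorem~3.1 implies that $u$ is $\mu$-panharmonic in $D$. The main bookkeeping obstacle is keeping the nested admissible subdomains consistent — each appeal to Theorem~2.1 is valid only for $x \in D_{r+r'}$, and one must check that this constraint survives the limit $r' \to 0$, which it does because the sets $D_{r+r'}$ increase to $D_r$ as $r' \to 0$ and the chosen $x$ already lies in $D_r$. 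Incidentally, the solubility of the Dirichlet problem is not actually used in this route; it would be needed only if one preferred to finish through Theorem~3.3 rather than Theorem~3.1.
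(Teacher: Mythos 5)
Your proof is correct, and it finishes by a genuinely different route than the paper. The paper applies Theorem~2.1 only once, to $M^\circ (u, \cdot, r)$ with a single fixed outer radius $r (x) \in (0, \mathrm{dist} (x, \partial D)/2)$, lets the inner radius $r \to 0$ directly in the double-integral representation \eqref{im'} of the iterated mean, and thus obtains \eqref{MM} at just one radius per point; this restricted mean value property is then fed into Theorem~3.3, which is exactly where the hypothesis on solubility of the Dirichlet problem is consumed. You instead apply Theorem~2.1 twice (to the means of radius $r$ and of radius $r'$), use the symmetry $I (u, x, r', r) = I (u, x, r, r')$ to get the two-radius identity $a^\circ (\mu r') \, M^\circ (u, x, r) = a^\circ (\mu r) \, M^\circ (u, x, r')$ on $D_{r+r'}$, and let $r' \to 0$ with $r$ ranging over the whole interval $(0, \min (\mathrm{dist} (x, \partial D), r_*))$; this gives \eqref{MM} for all $r \in (0, r (x))$ and allows you to conclude with Theorem~3.1 instead. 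The payoff is a strictly stronger result: as you correctly remark, the Dirichlet-solubility hypothesis becomes superfluous on your route. (In fact the paper's own computation secretly yields the same strengthening: since its $r (x)$ is arbitrary in $(0, \mathrm{dist} (x, \partial D)/2)$, the identity it derives also holds on an interval of radii, so Theorem~3.1 would have sufficed there too; routing through Theorem~3.3 appears to be an expository choice, presenting the result as an application of the restricted mean value property.) What the paper's route buys is brevity — a single application of Theorem~2.1 and a direct limit, with no need for the symmetry of $I$ or a second appeal to Theorem~2.1 — at the cost of the extra hypothesis. Your domain bookkeeping ($x \in D_{r+r'}$ throughout, preserved as $r' \to 0$ because the chosen $x$ already lies in $D_r$) is sound.
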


\begin{proof}
It is clear that every $x \in D$ belongs to each $D_r$ provided $r < \mathrm{dist}
(x, \partial D) / 2$, where $\mathrm{dist} (x, \partial D)$ is the distance from $x$
to $\partial D$. Let us fix some $r (x) \in (0, \mathrm{dist} (x, \partial D) / 2)$;
hence $\overline{B_{r (x)} (x)} \subset D_r$ for all described values of $r$. Since
the mean $M^\circ (u, \cdot, r)$ is $\mu$-panharmonic in $D_r$ for every such $r$,
Theorem~2.1 yields that
\[ M^\circ (M^\circ (u, \cdot, r), x, r (x)) = a^\circ (\mu \, r (x)) M^\circ (u,
x, r)
\]
In view of \eqref{im'} and \eqref{sm}, this can be written as follows:
\[ \frac{1}{\omega_m^2} \int_{S_1 (0)} \int_{S_1 (0)} u (x + r (x) y + r
z) \, \D S_z \, \D S_y = \frac{a^\circ (\mu \, r (x))}{\omega_m} \int_{S_1 (0)} u
(x +r y) \, \D S_y \,.
\]
Letting $r \to 0$ in this equality, we obtain that the identity 
\[ M^\circ (u, x, r (x)) = a^\circ (\mu \, r (x)) \, u (x)
\]
holds for every $x \in D$ with some $r (x)$ such that $\overline{B_{r (x)} (x)}
\subset D$. Now, Theorem~3.3 yields that $u$ is $\mu$-panharmonic in $D$.
\end{proof}

\subsection{Converse of the asymptotic mean value property}

The following converse of Proposition 2.1 generalizes the classical result obtained
by Blaschke \cite{Bla}, Priwaloff \cite{Pri} and Zaremba \cite{Za} for harmonic
functions.

\begin{theorem}
Let $D$ be a domain in $\RR^m$, $m \geq 2$, and let $u \in C^2 (D)$. If identity
\eqref{par} holds for every $x \in D$, then $u$ is $\mu$-panharmonic in $D$.

The assertion also holds with $M^\bullet (u, x, r)$ changed to $M^\circ (u, x, r)$
in \eqref{par}, provided the right-hand side term is changed to $\mu^2 u (x) / (2
m)$.
\end{theorem}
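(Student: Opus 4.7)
The plan is almost embarrassingly short, because the heavy lifting was already done during the proof of Proposition~2.1. The key observation is that the asymptotic identity
\[
\lim_{r \to +0} \frac{M^\bullet (u, x, r) - u (x)}{r^2} = \frac{\nabla^2 u (x)}{2(m+2)}
\]
established as \eqref{vol} was derived purely from Taylor's formula applied to $u \in C^2(D)$; panharmonicity was \emph{not} used anywhere in its derivation. Exactly the same remark applies to the spherical identity \eqref{sph}.

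Given this, I would proceed as follows. First, fix an arbitrary $x \in D$ and apply \eqref{vol} to the hypothesized function $u \in C^2(D)$. Second, compare the resulting value of the limit with the value prescribed by the hypothesis \eqref{par}. Since both expressions equal the same limit, I obtain
\[
\frac{\nabla^2 u (x)}{2(m+2)} = \frac{\mu^2 u (x)}{2(m+2)} \, ,
\]
so $\nabla^2 u (x) - \mu^2 u (x) = 0$. Because $x \in D$ was arbitrary, $u$ satisfies equation \eqref{Hh} pointwise in $D$, i.e., $u$ is $\mu$-panharmonic.

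For the second assertion I would repeat the argument verbatim with \eqref{vol} replaced by \eqref{sph} and the factor $1/[2(m+2)]$ replaced by $1/(2m)$; the cancellation is identical. The main (indeed only) obstacle is of a purely expository nature: I would want to explicitly flag that \eqref{vol} and \eqref{sph} are valid for any $C^2$ function so that the reader does not worry about a circularity between Proposition~2.1 and its converse. No further regularity bootstrapping is needed, because the hypothesis $u \in C^2(D)$ is built into the statement of the theorem, in contrast to the situation in Theorem~3.1 where the smoothness of $u$ had to be obtained via mollification.
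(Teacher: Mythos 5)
Your proposal is correct and is essentially identical to the paper's own proof: combine the hypothesis \eqref{par} with formula \eqref{vol} (which holds for any $C^2$ function, as it comes from Taylor's formula alone), and analogously \eqref{sph} for the spherical case. Your explicit remark that no circularity arises because panharmonicity is never used in deriving \eqref{vol} and \eqref{sph} is the right point to flag, and it is exactly what makes the paper's two-line argument legitimate.
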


\begin{proof}
Let equality \eqref{par} hold; combining it and formula \eqref{vol} one obtains that
$u$ is $\mu$-pan\-harmonic in $D$. In the same way, \eqref{par} and \eqref{sph}
yield the second assertion when $M^\circ (u, x, r)$ stands in \eqref{par} instead of
$M^\bullet (u, x, r)$, whereas the right-hand side term is $\mu^2 u (x) / (2 m)$.
\end{proof}

{\centering \section{Characterizations of balls via panharmonic functions} }

It is worth mentioning first that analytic characterization of balls in the Euclidean
space $\RR^m$ by means of harmonic functions has a long history; it started in the
1960s, in the pioneering notes \cite{E}, \cite{ES}, and shortly afterwards the
following general result was obtained.

\begin{theorem}[Kuran \cite{K}]
Let $D$ be a domain (= connected open set) of finite (Lebesgue) measure in the
Euclidean space $\RR^m$ where $m \geq 2$. Suppose that there exists a point $P_0$ in
$D$ such that, for every function $h$ harmonic in $D$ and integrable over $D$, the
volume mean of $h$ over $D$ equals $h (P_0)$. Then $D$ is an open ball (disk when
$m=2$) centred at $P_0$.
\end{theorem}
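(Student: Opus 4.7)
The plan is to compare $D$ with the ball $B^* = B_R(P_0)$ of equal Lebesgue measure and derive a contradiction from the assumption $D \neq B^*$. Normalize so that $P_0 = 0$, so that $|B^*| = |D|$. Let $N$ denote the Newton kernel centred at the origin: $N(x) = |x|^{2-m}$ for $m \geq 3$ and $N(x) = -\log|x|$ for $m = 2$; this kernel is harmonic on $\RR^m \setminus \{0\}$ and strictly decreasing in $|x|$.

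The first step extracts a potential-theoretic identity from the hypothesis. For each $y \notin D$ the shifted kernel $h_y(x) = N(x-y)$ is harmonic on $D$ and integrable there, so applying the hypothesis to $h_y$ yields
\[
U_D (y) = \int_D N(x-y) \, \D x = |D| \, N(y) \quad \text{for every } y \notin D .
\]
Since $U_D$ is continuous on $\RR^m$, this identity also holds on $\partial D$. Newton's shell theorem gives $U_{B^*}(y) = |B^*| N(y)$ for $y \notin \overline{B^*}$, while for $|y| < R$ one has the explicit radial interior expression obtained from $\Delta U_{B^*} = -c_m \chi_{B^*}$ (with $c_m = (m-2)\omega_m$ when $m \geq 3$).

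The core of the argument is to analyse $u = U_D - U_{B^*}$. This function is $C^{1,\alpha}$ on $\RR^m$ and satisfies $\Delta u = -c_m (\chi_D - \chi_{B^*})$ distributionally; in particular $\Delta u \leq 0$ on $D$, so $u$ is superharmonic there. Next I would verify that $u \geq 0$ on $\partial D$. For $y \in \partial D$ with $|y| \geq R$, combining $U_D(y) = |D| N(y)$ with the exterior formula for $U_{B^*}$ and the equality $|D| = |B^*|$ gives $u(y) = 0$. For $y \in \partial D \cap B^*$, substituting the interior formula for $U_{B^*}$ reduces the sign of $u(y)$ (in the case $m \geq 3$) to the positivity on $(0,1)$ of
\[
g(s) = 2 s^{2-m} - m + (m-2) \, s^2 , \qquad s = |y|/R ,
\]
which follows from $g(1) = 0$ together with $g'(s) = 2(m-2)(s - s^{1-m}) < 0$ on $(0, 1)$; the planar case is analogous with the corresponding logarithmic function. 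This explicit interior-versus-exterior comparison for the Newton potential of the ball is where I expect the book-keeping to be most delicate, and it is the main technical obstacle.

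Finally, a one-line rearrangement argument shows $u(0) < 0$. Indeed,
\[
u(0) = \int_{D \setminus B^*} N(x) \, \D x - \int_{B^* \setminus D} N(x) \, \D x ,
\]
and since $N$ is strictly decreasing in $|x|$, points of $D \setminus B^*$ satisfy $N(x) < N(R)$ while points of $B^* \setminus D$ satisfy $N(x) > N(R)$; because these two sets have equal positive Lebesgue measure (using $|D| = |B^*|$, the assumption $D \neq B^*$, and the openness of $D$), the right-hand side is strictly negative. Now $u$ is superharmonic on the connected domain $D$, continuous on $\overline{D}$, and non-negative on $\partial D$, so the weak minimum principle forces $u \geq 0$ throughout $D$, contradicting $u(0) < 0$. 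Hence $D = B^*$. When $D$ is unbounded but of finite measure, the argument is carried out on the bounded exhaustion $D \cap B_\rho(0)$, using that $u(y) \to 0$ as $|y| \to \infty$ (a consequence of $|D| < \infty$ via the multipole expansion of $U_D$) to pass to the limit.
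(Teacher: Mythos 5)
The paper contains no proof of this statement: it is imported from Kuran's 1972 paper (the citation key even resolves to Kellogg's monograph in the bibliography), and the only argument of this kind actually carried out in the paper is the proof of its panharmonic analogue, Theorem 4.2. So the comparison has to be with Kuran's original argument and with that proof. Your route is genuinely different from both. You first convert the hypothesis into the ``potato kugel'' identity $U_D(y)=|D|\,N(y)$ for $y\notin D$ --- which is precisely the hypothesis of the Aharonov--Schiffer--Zalcman-type Theorems 4.3 and 4.4 of the paper --- and then run a superharmonic-comparison/minimum-principle argument for $u=U_D-U_{B^*}$. Kuran instead takes the \emph{largest} ball $B=B_r(P_0)$ inscribed in $D$, picks $Q\in\partial B\cap\partial D$, and applies the hypothesis to the single harmonic function $h(x)=\bigl(r^2-|x-P_0|^2\bigr)/|x-Q|^m$, which is positive in $B$, negative outside $\overline B$, and integrable over $D$ because it is bounded away from its pole; then $|D|\,h(P_0)=\int_B h+\int_{D\setminus B}h\leq |B|\,h(P_0)$ forces $D=B$ in two lines. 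The paper's Theorem 4.2 proof is closest to your final rearrangement step (equal-measure ball, comparison of the integrals of a radial monotone function over $G_i$ and $G_e$), but with no PDE machinery. Your interior--exterior comparison ($g>0$ on $(0,1)$) is correct, and for bounded $D$ with $m\geq 3$ the argument goes through.

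There are, however, two genuine gaps. First, your claim that $D\neq B^*$, $|D|=|B^*|$ and openness of $D$ give $|D\setminus B^*|=|B^*\setminus D|>0$ is false when $D\subsetneq B^*$: then both differences are Lebesgue-null (example: $D$ a punctured ball, or $B^*$ minus a closed null set, which is still a domain of the same measure). In that case $U_D\equiv U_{B^*}$, so $u\equiv 0$, $u(0)=0$, and the minimum principle yields no contradiction --- your proof simply fails to exclude such $D$. The repair is cheap and uses your own ingredients: for such $D$ there is $p\in B^*\setminus D$, the hypothesis applies to $h_p$ (since $p\notin D$), and Step 1 gives $U_{B^*}(p)=|B^*|\,N(p)$ at an interior point $|p|<R$, contradicting your strict inequality $g>0$. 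But as written the case is missed; Kuran's proof avoids it structurally, because his ball is inscribed in $D$ and the final step ``$D\setminus\overline B$ is open and null, hence empty'' closes exactly this loophole.

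Second, the theorem allows unbounded domains of finite measure and includes $m=2$, and there your Step 1 breaks down: $h_y(x)=-\log|x-y|$ need not be integrable over an unbounded plane domain of finite area (take $D$ to contain disks of radius $r_n$ centred at distance $n$ from the origin with $\sum r_n^2<\infty$ but $\sum r_n^2\log n=\infty$), so the hypothesis cannot be applied to $h_y$, and $U_D$ itself may be undefined. One would have to use harmonic test functions bounded at infinity (for example differences $N(\cdot-y_1)-N(\cdot-y_2)$), or simply Kuran's function, which is bounded outside a neighbourhood of its pole and therefore integrable over every finite-measure $D$ in all dimensions.
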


Presumably, the paper \cite{HN} was the first one in which this theorem was referred
to as the property of harmonic functions inverse to the mean value identity for
balls. The term became widely accepted. A slight modification of Kuran's
considerations shows that his theorem is valid even if~$D$ is disconnected; see the
survey article \cite{NV}, p.~377, which also contains some improvements of Kuran's
theorem, and a discussion of its applications and of possible similar results
involving certain averages over $\partial D$, when $D$ is a bounded domain. It
occurs that panharmonic functions yield an analogous characterization of balls.

\subsection{Inverse mean value property: volume means}

The following result was recently proved in \cite{Ku5}; see also the brief note
\cite{Ku3}. Before giving its precise formulation, we give two definitions. If $D$
is a bounded domain and a function $f$ is integrable over $D$, then
\[ M^\bullet (f, D) = \frac{1}{|D|} \int_{D} f (x) \, \D x
\]
is the volume mean value of $f$ over $D$. Here and below $|D|$ is the domain's volume
(area if $D \subset \RR^2$). Also, we define a dilated copy of $D$: $D^r = D \cup
\left[ \cup_{x \in \partial D} B_r (x) \right]$. Thus, the distance from $\partial
D^r$ to $D$ is equal to $r$.

\begin{theorem}
Let $D \subset \RR^m$, $m \geq 2$, be a bounded domain, whose complement is
connected, and let $r$ be a positive number such that $|B_r| \leq |D|$. Suppose that
there exists a point $x_0 \in D$ such that for some $\mu > 0$ the mean value
identity $u (x_0) \, a^\bullet (\mu r) = M^\bullet (u, D)$ holds for every positive
function $u$, which is panharmonic in $D_r$, and $a^\bullet$ is defined in
\eqref{MM'}. If also $|D| = |B_r|$ provided $B_r (x_0) \setminus \overline D \neq
\emptyset$, then $D = B_r (x_0)$.
\end{theorem}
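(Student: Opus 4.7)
The plan is to follow the template of Kuran's classical proof of the harmonic inverse-mean-value theorem, adapted to the modified Helmholtz setting, by forcing the $\mu$-Newtonian potentials of $D$ and $B_r (x_0)$ to agree outside $\overline D \cup \overline{B_r (x_0)}$ and then deducing that the two sets themselves coincide.

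First I would observe that, since $x_0 \in D$, every $y \in B_r (x_0) \setminus D$ meets $\partial D$ along the segment $[x_0, y]$ at a point $z$ with $|y - z| < r$, so $y \in B_r (z) \subset D^r$; hence $B_r (x_0) \subset D^r$. (I read the notation $D_r$ in the statement as the dilated copy $D^r$, since otherwise $M^\bullet (u, D)$ would not be defined for $u$ panharmonic only in the subdomain $D_r \subsetneq D$.) Therefore any $u$ panharmonic in $D^r$ is panharmonic in a neighbourhood of $\overline{B_r (x_0)}$; Corollary~2.2 then gives $M^\bullet (u, x_0, r) = a^\bullet (\mu r) u (x_0)$ and, combined with the hypothesis, yields
\[
\frac{1}{|D|} \int_D u \, \D x \;=\; \frac{1}{|B_r|} \int_{B_r (x_0)} u \, \D x
\]
for every positive $u$ panharmonic on $D^r$. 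The identity extends to all real-valued panharmonic $u$ on $D^r$ by adding multiples of a fixed strictly positive panharmonic function, such as $\E^{\mu \xi \cdot x}$ with $|\xi|=1$.

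Next I would test with the modified-Helmholtz fundamental solution $\Phi_\mu$: for $y \notin \overline{D^r}$ the translate $\Phi_\mu (\cdot - y)$ is positive and panharmonic on $D^r$, so, writing $U_E (y) = \int_E \Phi_\mu (x - y) \, \D x$, the identity becomes $U_D (y) / |D| = U_{B_r (x_0)} (y) / |B_r|$. The difference $W (y) = U_D (y) / |D| - U_{B_r (x_0)} (y) / |B_r|$ is continuous on $\RR^m$, panharmonic on $\RR^m \setminus (\overline D \cup \overline{B_r (x_0)})$, and vanishes on the open set $\RR^m \setminus \overline{D^r}$. By real-analyticity of panharmonic functions and the connectedness of $\RR^m \setminus D$ (hence of $\RR^m \setminus (\overline D \cup \overline{B_r (x_0)})$), the vanishing extends to all of $\RR^m \setminus (\overline D \cup \overline{B_r (x_0)})$: the two normalized potentials coincide outside $\overline D \cup \overline{B_r (x_0)}$.

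The remaining and main step is converting this exterior equality into the set equality $D = B_r (x_0)$. I would split into cases. (i) If $B_r (x_0) \subset \overline D$, the $C^{1,\alpha}$-regularity of $W$ across $\partial D$ combined with $W \equiv 0$ outside $\overline D$ furnishes the overdetermined Cauchy data $W = 0 = \partial_n W$ on $\partial D$; a Holmgren/Serrin-type uniqueness argument for $L = \nabla^2 - \mu^2$ on $D \setminus \overline{B_r (x_0)}$ (where $LW$ equals the negative constant $-1/|D|$) then forces $D \setminus \overline{B_r (x_0)}$ to be empty. (ii) If $B_r (x_0) \setminus \overline D \neq \emptyset$, the hypothesis forces $|D| = |B_r|$, so $\int_{\RR^m} W \, \D y = 0$ by the translation invariance of $\int \Phi_\mu$, and the same overdetermined Cauchy data on $\partial D$ combined with the energy identity $\int_D W / |D| - \int_{B_r (x_0)} W / |B_r| = \int_{\RR^m} (|\nabla W|^2 + \mu^2 W^2) \, \D y$ (obtained by integrating $W \cdot LW$ by parts on $\RR^m$) forces $W \equiv 0$ and hence $D = B_r (x_0)$. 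The chief obstacle is precisely this last step: converting equality of the $\mu$-Newtonian potentials into set equality is a one-point-quadrature-domain uniqueness result for the modified Helmholtz operator, and, especially for rough $\partial D$, requires a careful combination of the maximum principle for $L$ and Hopf-type boundary estimates.
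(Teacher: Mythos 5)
There is a genuine gap, and it sits exactly where you locate it yourself: the conversion of the exterior equality of the volume-normalized Yukawa potentials of $D$ and $B_r(x_0)$ into the set equality $D = B_r(x_0)$. That step is a quadrature-domain uniqueness statement for the operator $\nabla^2 - \mu^2$, i.e.\ it is the whole difficulty of any potato-kugel-type theorem, and none of the tools you gesture at actually applies under the hypotheses. The theorem assumes nothing about $\partial D$ beyond boundedness of $D$, so there is no trace sense in which $W = \partial_n W = 0$ on $\partial D$, and no Hopf lemma, Holmgren argument or Serrin-type moving-plane argument is available. In case (ii) the energy identity cannot do the job you assign to it: integrating $W \cdot LW$ by parts gives $\iint \Phi_\mu(x-y)\,\rho(x)\rho(y)\,\D x\,\D y = \int_{\RR^m} \bigl( |\nabla W|^2 + \mu^2 W^2 \bigr)\,\D y$ with $\rho = \chi_D/|D| - \chi_{B_r(x_0)}/|B_r|$, and both sides are automatically nonnegative because $\Phi_\mu$ is a positive-definite kernel; adding the observation $\int_{\RR^m} W\,\D y = 0$ (which holds for every such $\rho$, since $\int \rho = 0$) yields nothing. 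Concluding $W \equiv 0$ from this is assuming what is to be proved. A secondary unproved point: the connectedness of $\RR^m \setminus (\overline D \cup \overline{B_r(x_0)})$, needed for your analytic continuation, does not follow from connectedness of $\RR^m \setminus D$ (think of a horseshoe-shaped $D$ with $B_r(x_0)$ plugging the gap).

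The paper's proof avoids all of this by never touching fundamental solutions. It tests the hypothesis with the single \emph{regular} function $U(x) = a^\circ(\mu\,|x - x_0|)$, which by the Poisson integral \eqref{PI} is panharmonic on all of $\RR^m$, positive, and strictly radially increasing from $U(x_0)=1$; it is therefore an admissible test function. The hypothesis gives $|D|\,a^\bullet(\mu r) = \int_D U\,\D y$, while identity \eqref{MM'} applied to $U$ over $B_r(x_0)$ gives $|B_r|\,a^\bullet(\mu r) = \int_{B_r(x_0)} U\,\D y$. Subtracting: if $B_r(x_0)\setminus\overline D \neq \emptyset$ (so $|D| = |B_r|$ by assumption), then $0 = \int_{G_i} U - \int_{G_e} U > 0$, because $U > a^\circ(\mu r)$ on $G_i = D \setminus \overline{B_r(x_0)}$, $U < a^\circ(\mu r)$ on $G_e = B_r(x_0)\setminus\overline D$, and $|G_i| = |G_e|$; if instead $B_r(x_0) \subset D$ with $D \neq B_r(x_0)$, one gets $a^\bullet(\mu r) > a^\circ(\mu r)$, contradicting inequality \eqref{aU}. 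No boundary regularity, continuation, or uniqueness theory enters. Your fundamental-solution strategy is essentially the one the paper reserves for Theorem 4.5, whose hypothesis concerns $E_\mu^\pm$ directly --- and even there the paper sidesteps the quadrature-domain problem by combining $E_\mu^+$ and $E_\mu^-$ into the entire function $\sinh(\mu|x-y|)/(\mu|x-y|)$, continuing the identity analytically to $x = x_0$, and then running the same radial monotonicity argument; with only the decaying solution $\Phi_\mu$, as in your proposal, that trick is unavailable.
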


Prior to proving this theorem, let us consider some properties of the function
\begin{equation*}
U (x) = a^\circ (\mu |x|) \, , \quad x \in \RR^m , \label{U}
\end{equation*}
The properties of $a^\circ$ defined in \eqref{MM} show that this spherically
symmetric function monotonically increases from unity to infinity as $|x|$ goes from
zero to infinity.

Moreover, Poisson's integral for $I_\nu$ (see \cite{NU}, p.~223) implies that:
\begin{equation}
U (x) = \int_0^1 (1 - s^2)^{(m - 3)/2} \cosh (\mu |x| s) \, \D s \, .
\label{PI}
\end{equation}
This representation is easy to differentiate, thus obtaining that $U$ is panharmonic
in $\RR^m$. Since the formulae for $a^\circ$ and $a^\bullet$ are similar, Poisson's
integral allows us to compare these functions. In that way, the inequality
\begin{equation} 
[ U (x) ]_{|x| = r} > a^\bullet (\mu r) \label{aU} 
\end{equation} 
immediately follows.

\begin{proof}[Proof of Theorem 4.2.] 
Without loss of generality, we suppose that the domain $D$ is located so that $x_0$
coincides with the origin. Let us show that the assumption that $D \neq B_r (0)$
leads to a contradiction.

It is clear that either $B_r (0) \subset D$ or $B_r (0) \setminus \overline D \neq
\emptyset$ (the equality $|B_r| = |D|$ is assumed in the latter case), and we treat
these two cases separately. Let us consider the second case first, for which
purpose we introduce the bounded open sets
\[ G_i = D \setminus \overline{B_r (0)} \quad \mbox{and} \quad G_e = B_r (0)
\setminus \overline D \, ,
\]
whose nonzero volumes are equal in view of the assumptions about $D$ and $r$. The
volume mean identity for $U$ over $D$ can be written as follows:
\begin{equation}
|D| \, a^\bullet (\mu r) = \int_D U (y) \, \D y \, ; \label{1}
\end{equation}
here the condition $U (0) = 1$ is taken into account. Since formula \eqref{MM'} is
valid for $U$ over $B_r (0)$, we write it in the same way:
\begin{equation}
|B_r| \, a^\bullet (\mu r) = \int_{B_r (0)} U (y) \, \D y \, . \label{2}
\end{equation}
Subtracting \eqref{2} from \eqref{1}, we obtain
\begin{equation*}
0 = \int_{G_i} U (y) \, \D y - \int_{G_e} U (y) \, \D y > 0 \, .
\end{equation*}
Indeed, the difference is positive since $U (y)$ (positive and monotonically
increasing with~$|y|$) is greater than $[U (y)]_{|y| = r}$ in $G_i$ and less than
$[U (y)]_{|y| = r}$ in $G_e$, whereas $|G_i| = |G_e|$. This contradiction proves
the result in this case.

In the case when $B_r (0) \subset D$, a contradiction must be deduced when $B_r (0)
\neq D$, that is,  $|G_i| = |D| - |B_r| > 0$. Now, subtracting \eqref{2} from
\eqref{1}, we obtain
\begin{equation*}
( |D| - |B_r| ) \, a^\bullet (\mu r) = \int_{G_i} U (y) \, \D y > |G_i| \, [ U (y)
]_{|y| = r} \, , \label{3}
\end{equation*}
where the last inequality is again a consequence of positivity of $U (y)$ and its
monotonicity. This yields that $a^\bullet (\mu r) > [ U (y) ]_{|y| = r}$, which
contradicts \eqref{aU}. The proof is complete.
\end{proof} 

\begin{remark}
In Theorem 4.2, the domain $D$ is supposed to be bounded because it is easy to
construct an unbounded domain of finite volume in which $U$ is not integrable. Thus,
the boundedness of $D$ allows us to avoid imposing rather complicated restrictions
on the domain.

In the limit $\mu \to 0$, one obtains Laplace's equation from \eqref{Hh}, whereas
the assumption about $r$ becomes superfluous in this case. Hence, Theorem 4.2 turns
into an improved version of Kuran's theorem because only positive harmonic functions
are involved.

Furthermore, the integral $\int_D u (y) \, \D y$ can be replaced by the flux
$\int_{\partial D} \partial u / \partial n_y \, \D S_y$ in the formulation of
Theorem~4.2 provided $\partial D$ is sufficiently smooth; here $n$ is the exterior
unit normal. Indeed, we have
\[ \int_D u (y) \, \D y = \mu^{-2} \int_D \nabla^2 u \, (y) \, \D y = \mu^{-2}
\int_{\partial D} \partial u / \partial n_y \, \D S_y \, .
\]
These relations are used in \cite{Ku4}; see comments to Theorem 9 of that paper.
\end{remark}

\subsection{Characterization of balls via fundamental solutions of equation (1.1)}

A different approach to harmonic characterization of balls was developed by
Aharonov, Schiffer and Zalcman \cite{ASZ}. The origin of a rather unusual title of
their paper (potato kugel is a traditional dish of Jewish cuisine commonly served
for Shabbat) is explained in Zalcman's comment; see \cite{Sch}, p.~497. Namely,
these authors proved the following.

\begin{theorem}[ASZ, \cite{ASZ}]
 Let $D \subset \RR^3$ be a bounded open set. If the equality
\begin{equation*}
\int_D \frac{\D y}{|y - x|} = \frac{a}{|x|} + b \label{asz}
\end{equation*}
holds with suitable real constants $a$ and $b$ for every $x \in \RR^3 \setminus D$,
then $D$ is an open ball centred at the origin, $a = |D|$ and $b=0$.
\end{theorem}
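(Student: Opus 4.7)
The strategy is to reduce the statement to Kuran's theorem (Theorem~4.1). Let
$$V(x) := \int_D \frac{\D y}{|y-x|}
$$
denote the Newtonian potential of $\chi_D$. Since $D$ is bounded, for $|x|$ larger than the diameter of $\overline D \cup \{0\}$ one has the uniform (in $y\in D$) multipole expansion
$$\frac{1}{|y-x|} = \frac{1}{|x|}\sum_{n \geq 0}\Bigl(\frac{|y|}{|x|}\Bigr)^{n} P_n\!\Bigl(\frac{y\cdot x}{|y|\,|x|}\Bigr),
$$
whose term-by-term integration and comparison with $a/|x|+b$ forces $b=0$, $a=|D|$, and the vanishing of every higher moment. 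Since the solid harmonics $|y|^{n} P_n(\cdot)$ of degree $n$ span the space of harmonic polynomials of that degree, one obtains
$$\int_D p(y)\,\D y = p(0)\,|D|\qquad\text{for every harmonic polynomial $p$.}
$$
Moreover, $V$ is continuous on all of $\RR^{3}$ (a standard property of Newton potentials of bounded densities of compact support), so the singularity of $|D|/|x|$ at the origin rules out $0 \in \RR^{3} \setminus D$; hence $0 \in D$.

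The remaining task is to promote the polynomial identity just obtained to the analogous identity for every function $h$ harmonic and integrable in $D$, at which point Kuran's Theorem~4.1 applied with $P_{0}=0$ finishes the argument (the radius of the ball is then fixed by $|B_R|=|D|$). This promotion amounts to a Runge/Walsh-type density statement: harmonic polynomials should be dense in the space of integrable harmonic functions on $D$ in a topology in which the linear functional $h \mapsto \int_D h\,\D y$ is continuous, so that the polynomial identity passes to the limit.

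The main obstacle is precisely this approximation step, since harmonic polynomials are not, in general, $L^{1}$-dense in the integrable harmonic functions on an arbitrary bounded open set. A natural way forward, and one closer in spirit to Aharonov--Schiffer--Zalcman, is to work instead with the explicit family of kernels $h_z(y) = 1/|y-z|$, $z \in \RR^{3} \setminus D$, which are harmonic on $D$ and for which the desired mean value identity $\int_D h_z\,\D y = |D|\,h_z(0)$ is precisely the hypothesis of the theorem; one then invokes a Runge-type approximation by sums of simple poles outside $D$ (using, if needed, connectedness of the complement, or an exhaustion of $D$ by such subdomains) to supply the missing density and complete the reduction to Kuran's theorem.
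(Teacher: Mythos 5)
You should first be aware that the paper contains no proof of this statement: Theorem~4.3 is quoted from Aharonov, Schiffer and Zalcman \cite{ASZ} purely as motivation for the panharmonic analogue (Theorem~4.5), so your argument has to stand on its own. The natural benchmarks are the original argument of \cite{ASZ} and the paper's proof of Theorem~4.5, and both are \emph{comparison} arguments (potential of $D$ versus potential of the equal-volume ball), not approximation arguments.

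The gap you flag yourself is fatal rather than a technicality to be ``supplied''. Your reduction to Kuran's theorem (Theorem~4.1) needs the span of the kernels $y \mapsto |y-z|^{-1}$, $z \in \RR^3 \setminus D$, to be dense in the space of integrable harmonic functions on $D$ in a topology for which $h \mapsto \int_D h \, \D y$ is continuous, i.e.\ essentially in $L^1(D)$ up to the boundary. No such Runge-type theorem is available for an arbitrary bounded open set: the hypothesis of Theorem~4.3 imposes nothing on $\partial D$, and the complement need not be connected, so your fallback ``using, if needed, connectedness of the complement'' is not permitted. Classical Runge/Walsh theorems give locally uniform approximation on compact subsets of $D$, and the functional $h \mapsto \int_D h \, \D y$ is simply not continuous for that topology; upgrading to $L^1$-approximation up to the boundary is exactly the hard problem (the territory of quadrature-domain theory), not a citable lemma. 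Moreover, the portion of your plan that is complete provably cannot suffice: for $D = B_R(0) \setminus \{q\}$ with $q \neq 0$, every conclusion you actually establish holds --- $0 \in D$, $b = 0$, $a = |D|$, and $\int_D p \, \D y = p(0)\,|D|$ for all harmonic polynomials $p$ --- yet $D$ is not a ball. Hence any correct proof must exploit the hypothesis at exterior points lying in the ``holes'' of $D$ (for this $D$ it fails precisely at $x = q$), information invisible both to the multipole expansion at infinity and to any compact-exhaustion approximation. This is why \cite{ASZ} proceed differently: letting $v$ be the potential of the ball $B$ centred at $0$ with $|B| = |D|$, one checks $u = a/|x| \geq v$ on $\RR^3 \setminus D$ and that $u - v$ is superharmonic on $(\RR^3 \setminus \overline B) \cup D$; the strong minimum principle then forces $u \equiv v$ off $\overline B$, which contradicts $\nabla^2 (u-v) = -4\pi$ on $D \setminus \overline B$ unless that set is empty, and the residual case $\overline D = \overline B$, $D \neq B$ is eliminated by evaluating the hypothesis at a point of $B \setminus D$. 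The same pattern --- a monotone radial comparison function together with equal volumes --- is exactly how the paper proves its own Theorem~4.5, so if you want an argument in the spirit of this survey, that comparison route, not a density reduction to Kuran, is the one to take.
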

Since $|y - x|^{-1}$ is a fundamental solution of the Laplace equation for $m=3$,
this theorem answers in the affirmative the following question posed to the authors
(see  \cite{ASZ}, p.~331):
\begin{quote}
Let $D$ be a solid, homogeneous, compact, connected ``potato'' in space, which
gravitationally attracts each point outside it as if all its mass were concentrated
at a single point [\dots] Must $D$ be spherical, i.e. a ball?
\end{quote}
There are various generalizations and improvements of this result. In particular,
the fol\-lowing one was obtained in the recent article \cite{CuLa}.

\begin{theorem}[Cupini, Lanconelli, \cite{CuLa}]
Let $D \subset \RR^m$, $m \geq 3$, be an open set such that $|D| < \infty$. If for
some $x_0 \in D$ the identity
\begin{equation*}
|D|^{-1} \int_D |y - x|^{2-m} \D y = |x_0 - x|^{2-m} \label{asz'}
\end{equation*}
holds for every $x \in \RR^m \setminus D$, then $D$ is an open ball centred at
$x_0$.
\end{theorem}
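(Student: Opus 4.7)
The plan is to reduce the claim to Kuran's theorem (Theorem~4.1) by converting the given family of identities (at the level of the fundamental solution) into a full volume mean value property. Normalise by translation so that $x_0 = 0$. For each $x \in \RR^m \setminus \overline D$ the function $h_x (y) := |y - x|^{2-m}$ is, up to a multiplicative constant, the fundamental solution of the Laplacian in $\RR^m$, and is harmonic in a neighbourhood of $\overline D$. The hypothesis then reads
\[
|D|^{-1} \int_D h_x (y) \, \D y = h_x (0) , \qquad x \in \RR^m \setminus D ,
\]
i.e.\ $D$ has the volume mean value property at the origin for the distinguished family $\{ h_x \}$.

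First I would show $D$ is bounded. A multipole analysis of both sides of the hypothesis as $|x| \to \infty$ forces the multipole moments of $\chi_D$ (with respect to the origin) to coincide with those of a point mass of magnitude $|D|$ at the origin; combined with $|D|<\infty$ this rules out positive measure of $D$ outside any fixed ball and yields $D \subset B_{R_*} (0)$ for some finite $R_* := \sup_{y \in \overline D} |y|$.

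With $D$ bounded, for $|x| > R_*$ the Gegenbauer expansion
\[
|y - x|^{2-m} = \sum_{n=0}^\infty |y|^n |x|^{2-m-n} \, C_n^{(m/2-1)} \! \left( \frac{y \cdot x}{|y| \, |x|} \right)
\]
converges uniformly on $\overline D$. Substituting into the hypothesis and matching the coefficients of $|x|^{-(m-2+n)}$ together with the angular dependence on $x/|x|$, one obtains $\int_D P (y) \, \D y = 0$ for every solid harmonic polynomial $P$ of degree $\ge 1$; equivalently,
\[
|D|^{-1} \int_D Q (y) \, \D y = Q (0)
\]
for every harmonic polynomial $Q$.

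The concluding step upgrades this identity from harmonic polynomials to every function $h$ harmonic in a neighbourhood of $\overline D$, after which Theorem~4.1 applied with $P_0 = 0$ immediately yields $D = B_R (0)$ for some $R > 0$. When $\RR^m \setminus \overline D$ is connected the upgrade is a direct consequence of the Runge--Walsh approximation theorem: harmonic polynomials approximate $h$ uniformly on $\overline D$, and the mean value identity passes to the limit. I expect the principal obstacle to lie in the multiply connected case, when $D$ has bounded ``holes'' in its complement. There one must enlarge the approximating family by adjoining the functions $h_{\tilde x}$ with $\tilde x$ ranging over a countable dense subset of each bounded component of $\RR^m \setminus \overline D$ --- the hypothesis already supplies the required mean value identity for each such $h_{\tilde x}$ --- and invoke the multiply connected form of Runge--Walsh to secure uniform density. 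A final minor passage from the class ``harmonic in a neighbourhood of $\overline D$'' to the class ``integrable and harmonic on $D$'' demanded by Theorem~4.1 is handled by an interior exhaustion of $D$.
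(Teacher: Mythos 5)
First, a point of reference: the paper itself does not prove this statement --- it is Theorem~4.4, quoted from Cupini and Lanconelli \cite{CuLa} --- so your proposal can only be judged on its own merits and against the paper's neighbouring proofs (Theorems~4.2 and~4.5).

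The decisive flaw is your final step. Kuran's theorem (Theorem~4.1) requires the mean value identity for \emph{every} $h$ harmonic in $D$ and integrable over $D$; this class contains functions that are singular at points of $\overline D \setminus D$, and Kuran's own proof hinges on exactly such a function (a Poisson-kernel-type function with pole at a point of $\partial D$). Your approximation scheme can only ever deliver the identity for functions harmonic in a neighbourhood of $\overline D$, and no ``interior exhaustion'' bridges the gap: you possess no mean value identity over the subdomains $D_\epsilon$, only over $D$ itself. Worse, the implication you ultimately rely on --- identity for all $h$ harmonic near $\overline D$ implies $D$ is a ball --- is \emph{false}. Take $D = B_R(0) \setminus \{p\}$ with $0 \neq p \in B_R(0)$: this is a connected open set of finite measure, not a ball; $\RR^m \setminus \overline D$ is connected, so your ``holes'' provision is vacuous; and for every $h$ harmonic near $\overline D = \overline{B_R(0)}$ one has $\int_D h \,\D y = \int_{B_R(0)} h \,\D y = |D|\, h(0)$. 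Thus all of your intermediate conclusions hold for this $D$. What excludes it in the actual theorem is precisely the part of the hypothesis you discard: the identity is required at \emph{every} $x \in \RR^m \setminus D$, including points lying inside $\overline D$, and at $x = p$ it fails, since $\int_{B_R(0)} |y - p|^{2-m}\,\D y < |B_R(0)|\,|p|^{2-m}$ by Newton's theorem (the potential of a uniform ball at an interior point is strictly below the point-mass potential). Any correct proof must retain these interior test points; this is what the potential-theoretic arguments of \cite{ASZ} and \cite{CuLa} do.

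A second, independent gap is the boundedness step. The theorem assumes only $|D| < \infty$ --- this generality is exactly what \cite{CuLa} adds to \cite{ASZ} --- and your argument for boundedness is circular: the Gegenbauer expansion converges only for $|y| < |x|$, so term-by-term integration over an unbounded $D$ is illegitimate, and the harmonic moments $\int_D P(y)\,\D y$ need not even converge there, since harmonic polynomials are unbounded on unbounded sets of finite measure; ``multipole moments match those of a point mass'' is not a principle that forces compact support. Finally, note that you could not have patterned a proof on the paper's own technique either: in Theorem~4.5 the identity is extended to $x = x_0$ through an \emph{entire} combination of the two fundamental solutions of \eqref{Hh}, and the conclusion follows by comparison with the ball via the strictly increasing radial function $U$; for the Laplacian the analogous entire radial solution is a constant, which yields nothing, and this is presumably why the harmonic case is cited rather than reproved.
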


A similar result, to which we now turn, is valid for the potential
\begin{equation}
E_\mu^- (x, y) = \frac{\exp \{- \mu |x-y|\}}{|x-y|} \, , \quad \mu > 0 , \quad x \in
\RR^3 \setminus \{ y \} , \label{E-}
\end{equation}
rapidly decaying with the distance; it was proposed by Yukawa \cite{Y} to describe
a source of nuclear force located at $y \in \RR^3$. Following the paper \cite{ASZ},
we restrict our considerations to three dimensions, thus answering the quoted
question for the nuclear setting. Since there is another linearly independent
fundamental solution of \eqref{Hh}, namely,
\begin{equation}
E_\mu^+ (x, y) = \frac{\exp \{\mu |x-y|\}}{|x-y|} \, , \quad \mu > 0 , \quad x \in
\RR^3 \setminus \{ y \} , \label{E+}
\end{equation}
it must also be taken into account.

For every $r>0$ and arbitrary $x_0 \in \RR^3$, these fundamental solutions define
two families of integrable panharmonic functions
\[ B_r (x_0) \ni y \mapsto  E_\mu^\pm (y, x) \ \ \mbox{parametrised by} \ x \in \RR^3 
\setminus B_r (x_0) \, .
\]
For every element of these families the mean value property \eqref{MM'} yields that
\begin{equation}
a_3 (\mu r) \, E_\mu^\pm (x, x_0) = M^\bullet (E_\mu^\pm (\cdot, x), x_0, r) \, , \
\ a_3 (t) = \sqrt{2 \pi} \, I_{3/2} (t) / t^{3/2} \, ;
\label{MME+}
\end{equation}
the latter function is $a^\bullet (t)$ for $m=3$. In view of Theorem~4.4 and
identity \eqref{MME+}, we prove the following.

\begin{theorem}
Let $D \subset \RR^3$ be a bounded domain, whose complement is connected, and let $r
> 0$ be such that $|B_r| = |D|$. If the fundamental solutions \eqref{E-} and
\eqref{E+} satisfy the mean value identity
\begin{equation}
a_3 (\mu r) \, E_\mu^\pm (x, x_0) = M (E_\mu^\pm (\cdot, x), D) \label{MMED}
\end{equation}
for some $x_0 \in D$ and every $x \notin D$, then $D = B_r (x_0)$.
\end{theorem}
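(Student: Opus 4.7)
The plan is to reduce matters to the framework of Theorem~4.2 by extracting a volume mean value identity for the single positive panharmonic function $U(y) := a^\circ(\mu|y-x_0|)$, which is entire on $\RR^3$, and then to run the monotonicity argument from the proof of that theorem.

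The decisive observation is that the two hypotheses (for $E_\mu^+$ and $E_\mu^-$) can be combined to eliminate the singular behaviour at $y = x$. Subtracting the $(-)$ identity from the $(+)$ identity and using $E_\mu^+(y,x) - E_\mu^-(y,x) = 2\sinh(\mu|y-x|)/|y-x| = 2\mu\, a^\circ(\mu|y-x|)$ gives, for every $x \in \RR^3 \setminus D$,
\[
a_3(\mu r) \, a^\circ(\mu|x-x_0|) \;=\; \frac{1}{|D|} \int_D a^\circ(\mu|y-x|) \, \D y .
\]
Next I would extend this identity to all of $\RR^3$. The left-hand side is panharmonic everywhere by \eqref{PI}, and so is the right-hand side, since $x \mapsto a^\circ(\mu|y-x|)$ is an entire panharmonic function for each fixed $y$ (it has no singularity at $y = x$), and differentiation under the integral sign is justified by uniform bounds on compacts. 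Since both sides are real-analytic on $\RR^3$ and agree on the nonempty open set $\RR^3 \setminus \overline D$, they coincide throughout $\RR^3$ by unique continuation.

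Evaluating at $x = x_0$ and using $a^\circ(0) = 1$ produces $|D|\, a_3(\mu r) = \int_D U(y)\, \D y$. Since $U$ is panharmonic in $\RR^3$, Corollary~2.2 applied on the admissible ball $B_r(x_0)$ yields $|B_r|\, a_3(\mu r) = \int_{B_r(x_0)} U(y)\, \D y$ (using the paper's identification $a_3 = a^\bullet$ for $m=3$). Subtracting and invoking $|D| = |B_r|$, one arrives at
\[
\int_{G_i} U(y)\, \D y \;=\; \int_{G_e} U(y)\, \D y, \qquad |G_i| \;=\; |G_e|,
\]
where $G_i = D \setminus \overline{B_r(x_0)}$ and $G_e = B_r(x_0) \setminus \overline D$. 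By the strict monotonicity of $U$ in $|y-x_0|$, the integrand on the left exceeds $a^\circ(\mu r)$ while the integrand on the right falls short of it; the displayed equality therefore forces $|G_i| = |G_e| = 0$, so that $G_i = G_e = \emptyset$ (being open), which is exactly $D = B_r(x_0)$.

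The main obstacle is the unique-continuation step: one must check that the integral $\int_D a^\circ(\mu|y-x|)\, \D y$ is genuinely panharmonic in $x$ across $\partial D$, a property that would fail for the individual integrals $\int_D E_\mu^\pm(y,x)\, \D y$ because of the $y = x$ singularity. The initial subtraction is precisely what removes this obstruction, and it also explains why the hypothesis of the theorem must involve \emph{both} fundamental solutions $E_\mu^+$ and $E_\mu^-$ rather than just one.
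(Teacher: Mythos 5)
Your proposal is correct and follows essentially the same route as the paper's own proof: the paper likewise takes the linear combination $(E_\mu^+ - E_\mu^-)/(2\mu) = \sinh(\mu|x-\cdot\,|)/(\mu|x-\cdot\,|)$ to cancel the singularity, extends the resulting identity from $\RR^3 \setminus D$ to all of $\RR^3$ by real-analyticity, evaluates at $x = x_0$ to get $|D|\,a_3(\mu r) = \int_D U(y)\,\D y$, and then repeats verbatim the monotonicity argument of Theorem~4.2 with $G_i$ and $G_e$. Your concluding observation about why \emph{both} fundamental solutions are needed simply makes explicit what the paper's remark on $z^{-1}\sinh z$ being entire leaves implicit.
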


\begin{proof}
Since $E_\mu^+ (x, x_0)$ and $E_\mu^- (x, x_0)$ satisfy \eqref{MMED} for every $x
\notin D$, the same is true for every linear combination of these fundamental
solutions. In particular,
\[ |D| \, a_3 (\mu r) \frac{\sinh (\mu |x - x_0|)}{\mu |x - x_0|} =
\int_D \frac{\sinh (\mu |x - y|)}{\mu |x - y|} \, \D y \ \ \mbox{for every} \ x
\notin D .
\]
Moreover, this identity is valid throughout $\RR^3$, because real-analytic functions
of $x$ stand on both sides (a consequence of the fact that $z^{-1} \sinh z$ is an
entire function). Substituting $x=x_0$, we obtain
\[ |D| \, a_3 (\mu r) = \int_D \frac{\sinh (\mu |x_0 - y|)}
{\mu |x_0 - y|} \, \D y \, .
\]
Let us relocate, without loss of generality, the domain $D$ so that $x_0$ coincides
with the origin, which simplifies the identity to
\begin{equation}
|D| \, a_3 (\mu r) = \int_D U (y) \, \D y \, , \ \ \mbox{where} \ \ U (y) =
\frac{\sinh (\mu |y|)} {\mu |y|} \, , \label{1+}
\end{equation}
because this is the function \eqref{PI} with $m=3$.

On the other hand, the mean value property \eqref{MM'} is valid for $U$ over $B_r$:
\begin{equation}
|B_r| \, a_3 (\mu r) = \int_{B_r} U (y) \, \D y \, . \label{2+}
\end{equation}
If we assume that $D \neq B_r$, then $G_i = D \setminus \overline{B_r}$ and $G_e =
B_r \setminus \overline D$ are bounded open sets such that $|G_e| = |G_i| \neq 0$,
which follows from the assumptions made about $D$ and $r$. Then, subtracting
\eqref{2+} from \eqref{1+}, we obtain
\begin{equation*}
0 = \int_{G_i} U (y) \, \D y - \int_{G_e} U (y) \, \D y > 0 \, .
\end{equation*}
Indeed, the difference is positive since $U (y)$ (positive and monotonically
increasing with~$|y|$) is greater than $[U (y)]_{|y| = r}$ in $G_i$ and less than
$[U (y)]_{|y| = r}$ in $G_e$, whereas $|G_i| = |G_e|$. The obtained contradiction
proves the result.
\end{proof}

\begin{remark}
The final part of this proof repeats literally the argument used in the proof of
Theorem~4.2.
\end{remark}

\vspace{-3mm}

{\centering \section{Relations between harmonic and panharmonic functions} }

The motivation to consider relations between harmonic and panharmonic functions
comes from the theorem on subharmonic functions published by F.~Riesz \cite{R} in
1930. It establishes the decomposition of such a function into the sum of a harmonic
function and a Newtonian potential. (The result was proved by Riesz for functions of
two variables, whereas the general case can be found in \cite{HK}, Section~3.5.) It
occurs that panharmonic and subharmonic functions have a lot in common; see
Corollary~2.1. Therefore, it is reasonable to apply methods developed for
subharmonic functions in studies of panharmonic ones.

\vspace{3mm}

\subsection{Properties of positive panharmonic functions}

It is worth to recall the Riesz decomposition theorem for subharmonic functions
(see, for example, \cite{HK}, Theorem~3.9). 

\begin{theorem}
If $u$ is subharmonic in a domain $D \subset \RR^m$, $m \geq 2$, then there exists
a unique Borel measure $\mathbf m$ in $D$ such that for any compact set $K \subset
D$
\begin{equation}
u (x) = \int_K E_m (x - y) \, \D \mathbf m (y) + h (x) , \quad x \in \mathrm{int} K
, \label{Ri}
\end{equation}
where $\mathrm{int} K$ is the interior of $K$ and $h$ is harmonic there.
\end{theorem}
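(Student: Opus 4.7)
The plan is to exploit the classical characterization that subharmonicity is equivalent to nonnegativity of the Laplacian in the distributional sense. First I would verify that $\Delta u \ge 0$ as a distribution on $D$: mollifying $u$ by a nonnegative radial approximate identity $\omega_\epsilon$ yields smooth functions $u_\epsilon = u * \omega_\epsilon$ on $D_\epsilon$ that inherit subharmonicity from $u$, because convolution with a radially symmetric kernel preserves the submean inequality (integrate the submean inequality for $u$ against $\omega_\epsilon$ in polar coordinates). Being $C^2$, each $u_\epsilon$ satisfies $\Delta u_\epsilon \ge 0$ pointwise, so testing against a nonnegative $\phi \in C_c^\infty(D)$ and letting $\epsilon \to 0$ gives $\langle \Delta u, \phi \rangle \ge 0$. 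L.~Schwartz's theorem that every nonnegative distribution is a Radon measure then furnishes a unique nonnegative Borel measure $\mathbf m$ on $D$ with $\Delta u = \mathbf m$ in the sense of distributions.

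Next I would construct the decomposition. Fix a compact $K \subset D$ and consider the Newtonian potential
\[
v (x) = \int_K E_m (x - y) \, \D \mathbf m (y) ,
\]
where $E_m$ is the fundamental solution of Laplace's equation normalized so that $\Delta E_m = \delta_0$. Since $\mathbf m (K)$ is finite and $E_m$ is locally integrable, $v$ is locally integrable on $\RR^m$; a Fubini argument combined with $\Delta E_m = \delta_0$ shows distributionally that $\Delta v$ equals $\mathbf m$ restricted to $K$, so in particular $\Delta v = \mathbf m$ on $\mathrm{int}\, K$. Setting $h := u - v$ gives $\Delta h = 0$ as a distribution on $\mathrm{int}\, K$, and Weyl's lemma upgrades $h$ (after possible modification on a null set) to a genuine harmonic function there. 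This produces \eqref{Ri}.

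For uniqueness, suppose $u = v_1 + h_1 = v_2 + h_2$ are two such decompositions, with $v_i$ the Newtonian potentials of Borel measures $\mathbf m_i$ and $h_i$ harmonic on $\mathrm{int}\, K$. Taking distributional Laplacians on $\mathrm{int}\, K$ kills the harmonic remainders and forces $\mathbf m_1 = \mathbf m_2$ there. Exhausting $D$ by an increasing sequence of compacts $K_n$ with $D = \bigcup_n \mathrm{int}\, K_n$ then extends the equality to all Borel subsets of $D$, yielding uniqueness of $\mathbf m$.

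The principal obstacle is the very first step: for a merely continuous subharmonic $u$, classical Green's identities are unavailable, so the passage from submean inequalities to a nonnegative distributional Laplacian must be mediated by the mollification argument above, together with the nontrivial preservation of subharmonicity under convolution. Once $\mathbf m$ is produced, the remainder of the argument reduces to routine bookkeeping within classical potential theory.
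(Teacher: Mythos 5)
The paper does not prove this statement at all: Theorem~5.1 is quoted as the classical Riesz decomposition theorem, attributed to \cite{R}, with \cite{HK}, Section~3.5 (Theorem~3.9 there) cited for the general case, so there is no internal argument to compare yours against. What you give is the standard modern proof that the cited literature essentially contains: mollification to show that the distributional Laplacian of $u$ is nonnegative, Schwartz's theorem to realize it as a Radon measure $\mathbf m$, the Newtonian potential of $\mathbf m$ restricted to $K$ together with Weyl's lemma to split off the harmonic remainder, and exhaustion by compacts for uniqueness. This is correct, and it has the added virtue of identifying $\mathbf m$ as $\Delta u$ in the sense of distributions, which is precisely the fact the paper exploits later: in the proof of Theorem~5.2 the Laplacian is applied to both sides of \eqref{Ri} to conclude that $\D \mathbf m (y) = \mu^2 u (y) \, \D y$ for panharmonic $u$. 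Two small points deserve tightening. First, the paper's Definition~2.1 of subharmonicity is the comparison-with-harmonic-functions one of \cite{GT}, while your mollification step uses the sub-mean-value inequality; for continuous $u$ the equivalence of the two characterizations is classical, but it should be invoked explicitly. Second, Weyl's lemma yields \eqref{Ri} only almost everywhere in $\mathrm{int}\, K$, whereas the statement asserts it at every point; to finish, observe that $v (x) = \int_K E_m (x - y) \, \D \mathbf m (y)$ and $u - h$ are both subharmonic in $\mathrm{int}\, K$ (the kernel $E_m (\cdot - y)$ is subharmonic, and positivity of $\mathbf m$ lets upper semicontinuity and the sub-mean inequality pass through the integral), and two subharmonic functions that agree a.e. agree everywhere, since $w (x) = \lim_{r \to +0} M^\bullet (w, x, r)$ for any subharmonic $w$. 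These are exactly the pieces of ``routine bookkeeping'' you allude to; neither is a gap of substance.
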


Here $E_m (x - y)$ is the fundamental solution of the Laplace equation:
\[ E_m (x - y) = \left[ (2 - m) \, \omega_m |x - y|^{(m-2)} \right]^{-1} \ \ 
\mbox{when} \ m \geq 3 \, ,
\]
whereas $E_ (x - y) = (2 \pi)^{-1} \log |x - y|$.

\begin{remark}
It follows from Treves' considerations (see \cite{T}, pp.~288--289) that if $u \geq
0$ is subharmonic in a bounded domain $D$, then formula \eqref{Ri} holds with $K$
changed to $D$, whereas $\D \mathbf m (y) = \nabla^2 u (y) \, \D y$ and $h$ is the
positive least harmonic majorant of $u$ in~$D$; for its definition see also
\cite{AG}, p.~79.
\end{remark}

Now we are in a position to formulate the following.

\begin{theorem}
Let $u \geq 0$ be $\mu$-panharmonic in a domain $D \subset \RR^m$, $m \geq 2$, then
\eqref{Ri} takes the following form:
\begin{equation}
h (x) = u (x) - \mu^2 \int_K E_m (x - y) \, u (y) \, \D y , \quad x \in \mathrm{int}
K . \label{Rie}
\end{equation}
Here $K \subset D$ is a compact set and $h$ is harmonic in $\mathrm{int} K$.

If $D$ is bounded and, besides, $u \in C^0 (\bar D)$, then \eqref{Rie} is valid in
the whole $D$ with the integral over $D$, whereas $h \geq 0$ is the least harmonic
majorant of $u$ in $D$.
\end{theorem}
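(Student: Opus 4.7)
The plan is to derive both statements by reducing to the classical Riesz decomposition (Theorem 5.1) and then identifying the Riesz measure explicitly using the smoothness of $u$ and the equation it satisfies. First, since $u\ge 0$ is $\mu$-panharmonic, Corollary~2.1 gives that $u$ is subharmonic in $D$. Hence Theorem~5.1 applies: for each compact $K\subset D$ there exist a unique Borel measure $\mathbf m$ on $D$ and a function $h$ harmonic in $\mathrm{int}\,K$ with
\[
u(x)=\int_K E_m(x-y)\,\D\mathbf m(y)+h(x),\qquad x\in\mathrm{int}\,K.
\]

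The core step is to identify $\mathbf m$. Because $u$ solves \eqref{Hh} classically, $u\in C^2(D)$ (and in fact $C^\infty(D)$ by elliptic regularity), so the Riesz measure coincides with the distributional Laplacian of $u$, that is, $\D\mathbf m(y)=\nabla^2 u(y)\,\D y$. This is the key identification, obtained by testing the decomposition against smooth compactly supported functions and using Green's identity together with the defining property of $E_m$ as a fundamental solution; for a $C^2$ function no subtleties beyond integration by parts arise. Panharmonicity then converts this density: $\nabla^2 u(y)=\mu^2 u(y)$, so $\D\mathbf m(y)=\mu^2 u(y)\,\D y$. Substituting and solving for $h$ yields \eqref{Rie}.

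For the second assertion, when $D$ is bounded and $u\in C^0(\bar D)$, I would invoke Remark~5.1 (Treves), which extends the decomposition with $K$ replaced by $D$ for nonnegative subharmonic functions continuous up to the boundary; it identifies the density as $\nabla^2 u$ and $h$ as the least harmonic majorant of $u$ in $D$. Panharmonicity once again turns the density into $\mu^2 u$, giving \eqref{Rie} throughout $D$ with the integral taken over $D$. Since $h$ majorizes $u$ and $u\ge 0$, one has $h\ge u\ge 0$, which closes the nonnegativity statement.

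The most delicate point is the passage from compact $K$ to the whole bounded $D$: one needs to know that the Newtonian potential $\int_D E_m(x-y)u(y)\,\D y$ is well defined and has a classical Laplacian equal to $\mu^2 u$ in $D$, and that its boundary values together with those of $u$ permit extracting a genuine harmonic majorant $h=u-\mu^2\int_D E_m(x-y)u(y)\,\D y$ that is minimal among harmonic majorants. Boundedness of $D$ and continuity of $u$ up to $\partial D$ ensure the integral is absolutely convergent and continuous on $\bar D$, after which the least-majorant property is precisely Treves' conclusion cited in Remark~5.1; so this obstacle is handled by importing that result rather than reproving it.
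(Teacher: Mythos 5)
Your proposal is correct and follows essentially the same route as the paper: both use Corollary~2.1 to get subharmonicity, apply the Riesz decomposition of Theorem~5.1, identify the Riesz measure as $\mu^2 u(y)\,\D y$ by hitting the decomposition with the Laplacian (your distributional phrasing of this step is just a more careful version of the paper's), and then invoke Remark~5.1 (Treves) for the global statement on bounded $D$ and the least-harmonic-majorant claim. The extra care you take with the passage from compact $K$ to all of $D$ is detail the paper leaves implicit, not a divergence in method.
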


\begin{proof}
According to Corollary 2.1, $u$ is subharmonic in $D$, and so it has the Riesz
decomposition \eqref{Ri}. Applying the Laplacian to both sides of \eqref{Ri} and
taking into account equation \eqref{Hh} on the left-hand side and using the
harmonicity of $h$ and the definition of $E_m$ on the right, we see that $\mathbf m$
is proportional to the Lebesgue measure with the coefficient $\mu^2 u$ (cf.
Remark~5.1). Now \eqref{Rie} follows by rearranging.

The second assertion is obvious, whereas the last one is a consequence of the
considerations mentioned in Remark~5.1.
\end{proof}

Our next result involves mean values over a domain as well as over its boundary. In
this case, one can hardly expect an identity analogous to \eqref{MMtil} to be valid
for panharmonic functions in a domain distinct from a ball. Indeed, Bennett \cite{B}
proved the following.

\begin{theorem}
Let $D \subset \RR^m$ be a bounded domain with sufficiently smooth boundary.~If
\[ |D|^{-1} \int_D h (y) \, \D y = |\partial D|^{-1} \int_{\partial D} h (y) \, \D S_y
\]
for every $h \in C^2 (\bar D)$ harmonic in $D$, then $D$ is an open ball.
\end{theorem}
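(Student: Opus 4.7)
The plan is to convert the mean value hypothesis into an overdetermined boundary value problem for an auxiliary function and then invoke Serrin's classical rigidity theorem.

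First I would introduce $v \in C^2(\overline D)$ defined as the unique solution of the Poisson problem $\nabla^2 v = 1$ in $D$ with $v = 0$ on $\partial D$; existence and $C^2$-regularity up to the boundary are guaranteed by the smoothness of $\partial D$. For any $h \in C^2(\overline D)$ harmonic in $D$, Green's second identity applied to $h$ and $v$ yields
$$\int_D h(y)\,\D y = \int_D \bigl(h\,\nabla^2 v - v\,\nabla^2 h\bigr)\,\D y = \int_{\partial D} h(y)\,\frac{\partial v}{\partial n}(y)\,\D S_y,$$
since $\nabla^2 h = 0$ in $D$ and $v = 0$ on $\partial D$. Combining this with the theorem's hypothesis produces
$$\int_{\partial D} h(y)\left[\frac{\partial v}{\partial n}(y) - c\right] \D S_y = 0, \qquad c := \frac{|D|}{|\partial D|},$$
for every $h \in C^2(\overline D)$ harmonic in $D$.

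Next I would use the density of boundary traces of $C^2(\overline D)$-smooth harmonic functions in $C^0(\partial D)$: given any $\varphi \in C^0(\partial D)$, solubility of the Dirichlet problem for the Laplace equation (available here since $\partial D$ is sufficiently smooth) produces a harmonic $h$ whose trace is $\varphi$, and Schauder estimates provide the required $C^2(\overline D)$-regularity. Sweeping $\varphi$ through a dense class forces the pointwise identity $\partial v/\partial n \equiv c$ on $\partial D$.

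Consequently $v$ simultaneously satisfies the overdetermined system
$$\nabla^2 v = 1 \text{ in } D, \qquad v = 0 \text{ and } \partial v/\partial n = c \text{ on } \partial D.$$
By Serrin's 1971 theorem on overdetermined elliptic problems, proved via the method of moving planes (with a subsequent variational proof due to Weinberger), the very existence of such a $v$ forces $D$ to be an open ball. The main obstacle is the density step: one must calibrate what ``sufficiently smooth'' means so that both the Dirichlet problem has solutions with the required boundary regularity and Serrin's hypotheses are satisfied; $C^{2,\alpha}$-regularity of $\partial D$ suffices for both purposes.
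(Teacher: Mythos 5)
Your argument is correct, but it is not the paper's: the paper offers no proof of this statement at all, quoting it as Bennett's theorem with a reference to \cite{B}. Bennett's own proof, as the title of \cite{B} indicates, runs through a rigidity theorem for an overdetermined \emph{fourth-order} boundary value problem (the clamped-plate analogue of Serrin's problem: $\nabla^2 \nabla^2 w$ constant in $D$, with $w = \partial w / \partial n = 0$ on $\partial D$ and $\nabla^2 w$ constant on $\partial D$), and the mean value characterization is deduced from that symmetry result. You instead reduce directly to Serrin's original second-order theorem via the torsion function $v$ solving $\nabla^2 v = 1$ in $D$, $v = 0$ on $\partial D$: Green's second identity turns the hypothesis into $\int_{\partial D} h \, (\partial v / \partial n - |D|/|\partial D|) \, \D S_y = 0$ for all admissible harmonic $h$; the density of traces of $C^{2,\alpha} (\bar D)$ harmonic functions in $C^0 (\partial D)$ (Schauder theory on a $C^{2,\alpha}$ boundary) upgrades this to $\partial v / \partial n \equiv |D|/|\partial D|$ on $\partial D$ --- a constant that is consistent, since by the divergence theorem it equals the average of $\partial v / \partial n$ --- and Serrin's 1971 theorem then forces $D$ to be a ball. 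What your route buys is economy: a three-line reduction plus one standard black box. What Bennett's route buys is a fourth-order symmetry theorem of independent interest, of which the mean value property is a corollary. The one delicate point in your argument, the density and regularity bookkeeping needed to pass from integral orthogonality to the pointwise Neumann condition, you identified and calibrated correctly.
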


A similar conjecture for panharmonic functions based on identity \eqref{MMtil} is
made in \cite{Ku5}. At the same time, an inequality holds between the mean values of
nonnegative panharmonic functions in a bounded domain under a suitable assumption
about its boundary.

\begin{proposition}
Let $D \subset \RR^m$ be a bounded domain satisfying the exterior sphere condition
uniformly on $\partial D$. Then there exists a constant $c \in [1, \infty)$,
depending on $D$ and~$\mu$, and such that
\begin{equation}
|D|^{-1} \int_D u (y) \, \D y \leq c \, |\partial D|^{-1} \int_{\partial D} u (y) \,
\D S_y \label{FM}
\end{equation}
for every nonnegative panharmonic function $u \in C^0 (\bar D)$.
\end{proposition}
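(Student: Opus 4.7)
The plan is to dominate $u$ by a harmonic function sharing its boundary values and then exploit the Poisson representation on $D$. First I would invoke Corollary~2.1 to conclude that the nonnegative panharmonic function $u$ is subharmonic in $D$. The uniform exterior sphere condition guarantees that every point of $\partial D$ admits a classical Perron barrier for $\nabla^2$, so the Dirichlet problem for the Laplace equation is solvable and the harmonic extension $h\in C^2(D)\cap C^0(\bar D)$ of the continuous trace $u|_{\partial D}$ exists. Applying Definition~2.1 with this harmonic $h$, and using that $u=h$ on $\partial D$, I would obtain $u\le h$ throughout $\bar D$, so it suffices to estimate $\int_D h\,\D y$ from above by a multiple of $\int_{\partial D}u\,\D S_y$.

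Second, I would use the Poisson integral representation
\[
h(x)=\int_{\partial D}P(x,y)\,u(y)\,\D S_y,
\]
where $P(x,y)\ge 0$ is the Poisson kernel for $-\nabla^2$ on $D$. Fubini's theorem then yields
\[
\int_D h(x)\,\D x=\int_{\partial D}\Big(\int_D P(x,y)\,\D x\Big)\,u(y)\,\D S_y.
\]
Writing $P(x,y)=-\partial_{n_y}G(x,y)$ with $G$ the Dirichlet Green function of $-\nabla^2$ on $D$ and exploiting its symmetry, the inner integral equals $-\partial_{n_y}T(y)$, where $T$ is the torsion function of $D$, that is, the solution of $-\nabla^2 T=1$ in $D$ with $T=0$ on $\partial D$.

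The core obstacle is to show that $K(D):=\sup_{y\in\partial D}|\partial_{n_y}T(y)|$ is finite, and this is precisely where the uniform exterior sphere condition is needed. At every $y_0\in\partial D$ with exterior ball $B_\rho(z_0)$ one constructs a classical barrier of the standard form (a shifted multiple of $|x-z_0|^{2-m}$ for $m\ge 3$, logarithmic for $m=2$, corrected by a quadratic term absorbing the unit source), uniform in $y_0$ because $\rho$ is. Standard boundary gradient estimates (Gilbarg--Trudinger, \S3.2) then give $|\nabla T(y)|\le K(D)<\infty$ uniformly on $\partial D$. Combining the three steps yields
\[
\int_D u(y)\,\D y\le K(D)\int_{\partial D}u(y)\,\D S_y,
\]
which is \eqref{FM} with $c=\max\bigl(1,K(D)|\partial D|/|D|\bigr)$. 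Note that this route produces a constant depending on $D$ alone; the advertised $\mu$-dependence would come, instead, from replacing $P$ and $G$ throughout by the Poisson kernel and Green function of \eqref{Hh} on $D$ (whose existence follows from solvability of the panharmonic Dirichlet problem under the same regularity hypothesis), but for the inequality as stated the harmonic majorant is enough.
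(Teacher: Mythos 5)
Your opening move is exactly the paper's entire proof: by Corollary~2.1 a nonnegative $\mu$-panharmonic function is subharmonic in $D$, after which the paper simply cites the corresponding inequality for nonnegative subharmonic functions due to Freitas and Matos (\cite{FM}, p.~195). Everything you do beyond that step is an attempt to re-prove the cited result, and your strategy --- replace $u$ by its harmonic majorant $h$ (legitimate: boundary points with exterior spheres are regular, so the Perron solution attains the boundary data, and $u\le h$ follows from the maximum principle for subharmonic functions), then bound $\int_D h\,\D x$ through the torsion function $T$ with barriers built from the uniform exterior spheres --- is the right one. The solid part is the barrier estimate $0\le T(x)\le K\,\mathrm{dist}(x,\partial D)$ with $K=K(\rho,\mathrm{diam}\,D,m)$, whence $\sup_D|\nabla T|\le K$ by interior gradient estimates for the Poisson equation.

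The gap is that your intermediate steps use machinery that exists only on smooth (or at least Lipschitz) domains, while the hypothesis allows much rougher boundaries: a uniform exterior sphere condition permits inward cusps --- remove from a large ball two mutually tangent balls of radius $\rho$; near the tangency point $\partial D$ fails to be a Lipschitz graph. Under the stated hypothesis the Perron solution is only known to satisfy $h(x)=\int_{\partial D}u(y)\,\D\omega^x(y)$, where $\omega^x$ is harmonic measure; writing $h(x)=\int_{\partial D}P(x,y)\,u(y)\,\D S_y$ presumes the absolute continuity $\omega^x\ll\D S$, which is a deep property of a domain (Dahlberg's theorem for Lipschitz domains) and not something the exterior sphere condition hands you. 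The same objection applies to the identity $P(x,y)=-\partial_{n_y}G(x,y)$ and to the pointwise normal derivative $\partial_{n_y}T(y)$ on $\partial D$: the barrier argument you invoke from \cite{GT} yields the Lipschitz bound on $T$, not the existence of normal derivatives on a non-smooth boundary. One can sidestep the boundary by applying Green's identity on the smooth level sets $\{T>\epsilon\}$, which gives $\int_{\{T>\epsilon\}}h\,\D x\le K\int_{\{T=\epsilon\}}h\,\D S$; but the passage $\epsilon\to 0$, relating $\int_{\{T=\epsilon\}}h\,\D S$ to $\int_{\partial D}u\,\D S$ for such boundaries, is precisely the technical content of the theorem the paper outsources to \cite{FM}, and it does not follow from the smooth-domain formulas you quote. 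With a Lipschitz or $C^{1,1}$ boundary your argument closes (and your final remarks are correct: a constant depending on $D$ alone suffices, and $c=\max\bigl(1,K|\partial D|/|D|\bigr)$ has the right form); under the hypothesis as stated, it does not.
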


In view of Corollary 2.1, inequality \eqref{FM}, like Theorem~5.2, is a consequence
of the corresponding theorem proved for subharmonic functions; see \cite{FM},
p.~195. Moreover, if $D$ is a ball $B_r$ (there is no need to specify its center),
then equality takes place in \eqref{FM} with
\begin{equation*}
c = \frac{a^\bullet (\mu r)}{a^\circ (\mu r)} = \frac{m I_{m/2} (\mu r)}{\mu r
I_{(m-2)/2} (\mu r)} < \frac{m}{\mu r} \, . \label{18}
\end{equation*}
Here the equalities follow from identity \eqref{MMtil} and formulae \eqref{MM} and
\eqref{MM'}, whereas the inequality is a consequence of the definition of $I_\nu$.
This not only demonstrates that $c$ depends on $\mu$, but also improves
Proposition~5.1 for balls provided $\mu r > m$. It occurs that $c$ can be
arbitrarily small when either $r$ ($\mu$ fixed) or $\mu$ ($r$ fixed) is sufficiently
large (or both are sufficiently large).

\subsection{Characterization of panharmonic functions}

Let $D$ be a bounded domain in $\RR^m$, $m \geq 3$; for $u \in L^2 (D)$ we define
the operator:
\[ (T u) (x) = \int_D E_m (x - y) \, u (y) \, \D y , \quad x \in D .
\]
Its symmetric kernel is positive after dropping the negative coefficient and it has
well-known properties; for example, $T$ is compact in the Banach space $C^0 (\bar
D)$ (see, for example, \cite{M}, Chapter~7).

In terms of this operator, the second assertion of Theorem 5.2 admits the following
interpretation: $I - \mu^2 T$ (as usual, $I$ stands for the identity operator) maps
the cone of nonnegative $\mu$-panharmonic functions into the cone of nonnegative
harmonic functions within the Banach space $C^0 (\bar D)$.

Let us consider whether there exists an inverse mapping: harmonic $\mapsto$
$\mu$-panharmonic functions. To this end we introduce the integral equation
\begin{equation}
u (x) - \lambda (T u) (x) = h (x) , \quad x \in D , \ \ \lambda \in \RR ,
\label{21}
\end{equation}
where $u, h \in L^2 (D)$. This is a natural setting because the operator $T$ has a
weakly singular kernel, and so is compact and self-adjoint, whereas $-T$ is a
positive operator in this space. We recall that these properties of $T$ imply that
it has a sequence $\{\lambda_n\}_1^\infty$ of characteristic values each having a
finite multiplicity; moreover, these values are real negative numbers such that
$|\lambda_n| \to \infty$ as $n \to \infty$. Finally, if $\lambda \neq \lambda_n$ for
$n = 1,2,\dots$ (in particular, if $\lambda > 0$), then for any $h \in L^2 (D)$
equation \eqref{21} has a unique solution $u \in L^2 (D)$, which can be represented
by virtue of the resolvent kernel. Taking into account these facts, we formulate and
prove the following assertion, in which $C^{0, \alpha} (\bar D)$ stands for the
Banach space of functions that are H\"older continuous with exponent $\alpha \in (0,
1)$.

\begin{theorem}
Let $D$ be a bounded Lipschitz domain in $\RR^m$, $m \geq 3$, and let $\lambda =
\mu^2 > 0$ in equation \eqref{21}. If $h \in C^{0, \alpha} (\bar D)$ is harmonic in
$D$, then a unique solution $u$ of this equation belongs to $C^{0, \alpha} (\bar D)$
and is $\mu$-panharmonic in $D$.
\end{theorem}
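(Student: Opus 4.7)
\noindent The plan is to first invoke the Fredholm discussion summarized just before the theorem in order to obtain the unique $L^2(D)$ solution, then bootstrap its regularity up to $C^{0,\alpha}(\bar D)$ by means of classical Newtonian potential estimates, and finally apply the Laplacian to equation~\eqref{21} in order to read off panharmonicity.

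For the regularity bootstrap, I would use standard mapping properties of the potential $T$: since $m \geq 3$, its kernel $E_m(x-y)$ is weakly singular of order $m-2 < m$, and one has
\[
T : L^2(D) \to L^\infty(D) \qquad \mbox{and} \qquad T : L^\infty(D) \to C^{0,\beta}(\bar D)
\]
for every $\beta \in (0,1)$; see \cite{GT}, Lemma~4.1, or \cite{M}, Ch.~7. The constants in these mappings depend only on $\mathrm{diam}\,D$, so the Lipschitz character of $\partial D$ is more than enough. Applying them successively, from $u \in L^2(D)$ I get first that $Tu \in L^\infty(D)$, hence $u = h + \lambda Tu \in L^\infty(D)$, and then that $Tu \in C^{0,\alpha}(\bar D)$. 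Combining this with $h \in C^{0,\alpha}(\bar D)$, I conclude that $u \in C^{0,\alpha}(\bar D)$.

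For panharmonicity, once $u \in C^{0,\alpha}(\bar D)$ is established, the Schauder theory of the Newtonian potential (see \cite{GT}, Lemma~4.2) provides $Tu \in C^{2,\alpha}_{\mathrm{loc}}(D)$ together with the pointwise identity $\nabla^2 (Tu) = u$ in $D$ (which reflects the distributional identity $\nabla^2 E_m = \delta$). Applying the Laplacian to \eqref{21} and using the harmonicity of $h$, I then arrive at
\[
\nabla^2 u - \mu^2 u = \nabla^2 h + \lambda\, \nabla^2 (Tu) - \mu^2 u = 0 \quad \mbox{in } D,
\]
which is exactly equation~\eqref{Hh}; hence $u$ is $\mu$-panharmonic in $D$.

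The hard part is the bootstrap step: one needs that the Newton potential of a merely bounded function is H\"older continuous all the way up to the Lipschitz boundary. This ultimately rests on an estimate of the form $|(Tu)(x) - (Tu)(y)| \leq C \|u\|_\infty |x-y|^\beta$, whose proof splits the defining integral into a small ball around $x$ and $y$ and its complement and depends only on $\|u\|_\infty$ and $\mathrm{diam}\,D$, not on any smoothness of $\partial D$ beyond boundedness; this is what makes the Lipschitz hypothesis sufficient here and also what dictates the restriction $m \geq 3$ (for $m=2$ the kernel is logarithmic and the same scheme would require minor adjustments).
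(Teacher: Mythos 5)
Your overall architecture (unique $L^2$ solution by Fredholm theory, bootstrap to $C^{0,\alpha}(\bar D)$, then \cite{GT}, Lemma~4.2 to get $Tu \in C^2(D)$ with $\nabla^2(Tu) = u$, and finally apply the Laplacian to \eqref{21}) is exactly the paper's plan, and your endgame is identical to the paper's. But the first arrow of your bootstrap, $T : L^2(D) \to L^\infty(D)$, is false once $m \geq 4$, and the theorem is stated for all $m \geq 3$. By Cauchy--Schwarz, boundedness of $Tu$ for every $u \in L^2(D)$ requires $\int_D |x-y|^{2(2-m)}\,\D y < \infty$, i.e.\ $2(m-2) < m$, which holds only for $m < 4$. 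Concretely, for $m \geq 5$ take $u(y) = |y|^{-s}$ near the origin with $2 \leq s < m/2$: then $u \in L^2(D)$ but $(Tu)(0)$ diverges; for $m = 4$ the same happens with $u(y) = |y|^{-2}\bigl(\log(1/|y|)\bigr)^{-1}$. Neither \cite{GT}, Lemma~4.1 (which takes \emph{bounded} densities) nor the weak singularity of the kernel gives you $L^2 \to L^\infty$ in one step; the Riesz-potential gain from $L^2$ is only $1/q = 1/2 - 2/m$, which stays short of $L^\infty$ in higher dimensions.

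The gap is repairable, and the repair is where your proof and the paper's genuinely diverge in mechanism. You can iterate: using \eqref{21} itself, if $u \in L^{p}(D)$ then $Tu \in L^{q}(D)$ with $1/q = 1/p - 2/m$ (Hardy--Littlewood--Sobolev on a bounded domain), hence $u = \mu^2 Tu + h \in L^q(D)$ since $h$ is bounded; after finitely many steps $p$ exceeds $m/2$, at which point the kernel $|x-y|^{2-m}$ lies in $L^{p'}$ uniformly in $x$ and $Tu$ is indeed bounded, after which your chain resumes and is correct. The paper short-circuits this climb by citing Mikhlin's theorem on weakly singular equations (\cite{M}, Theorem~8.6.1: an $L^2$ solution is continuous when the free term is continuous), and then gets H\"older regularity from \cite{V}, Lemma~2.3 ($Tu \in C^{0,1}(\bar D)$ for $u \in L^\infty(D)$) before invoking \cite{GT}, Lemma~4.2 exactly as you do. So: for $m = 3$ your proof is complete and essentially the paper's; for $m \geq 4$ it has a genuine hole that must be filled either by the iteration sketched above or by the citation the paper uses.
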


\begin{proof}
It is a classical result (see, for example, \cite{M}, Theorem~8.6.1) that an
$L^2$-solution of a weakly singular integral equation (it exists in our case) is in
$C^{0} (\bar D)$ provided the right-hand side term has this property. However, the
continuity of $u$ does not guarantee the existence of second derivatives of the
Newtonian potential~$T u$. Let us establish their existence under the assumptions
made in the theorem.

Since $h \in C^{0, \alpha} (\bar D)$, the solution $u$ has the same property.
Indeed, writing the equation as follows
\begin{equation}
u = \mu^2 T u + h \, , \label{22}
\end{equation}
we see that both terms on the right are in $C^{0, \alpha} (\bar D)$, because this is
a consequence of the following result (see \cite{V}, Lemma 2.3). If $u \in L^\infty
(D)$, then $T u \in C^{0, 1} (\bar D)$, that is, $T u$ is Lipschitz continuous. Now,
another classical result (see \cite{GT}, Lemma~4.2) yields that $T u \in C^{2} (D)$
and $\nabla^2 (T u) = u$. Furthermore, relation \eqref{22} implies that $u \in C^{2}
(D)$ since $h$ is harmonic in $D$. Then, applying the Laplacian to both sides of
\eqref{22}, we obtain that $u$ is $\mu$-panharmonic in $D$.
\end{proof}

In other words, for every $\mu^2 > 0$ there exists the bounded operator
\[ (I - \mu^2 T)^{-1} : L^2 (D) \to L^2 (D) \, ,
\]
which maps each harmonic in $D$ function from $C^{0, \alpha} (\bar D)$ to a
$\mu$-panharmonic function belonging to the same H\"older space. It is not clear
whether the range of this operator comprises the whole set of $\mu$-panharmonic
functions. At the same time, Theorem~5.2 yields that every nonnegative function
belonging to this set has a pre-image in the cone of nonnegative harmonic functions.
Hence, all nonnegative $\mu$-panharmonic functions are in the operator's range.

\vspace{-14mm}

\renewcommand{\refname}{
\begin{center}{\Large\bf References}
\end{center}}
\makeatletter
\renewcommand{\@biblabel}[1]{#1.\hfill}
\makeatother

\end{document}